\newtheorem{theorem}{Theorem}[section]
\newtheorem{proposition}[theorem]{Proposition}
\theoremstyle{definition}
\newtheorem{definition}[theorem]{Definition}
\newenvironment{example}
{\pushQED{\qed}\examplex}
{\popQED\endexamplex}
\def\Ddots{\mathinner{\mkern1mu\raise\p@
\vbox{\kern7\p@\hbox{.}}\mkern2mu
\raise4\p@\hbox{.}\mkern2mu\raise7\p@\hbox{.}\mkern1mu}}
\newcommand{\RR}{\mathbb{R}}
\newcommand{\QQ}{\mathbb{Q}}
\title{\bf Logarithmically Sparse Symmetric Matrices}
\author{Dmitrii Pavlov}
\date{}
\begin{document}
\maketitle

\begin{abstract}
\noindent
A positive definite matrix is called logarithmically sparse if its matrix logarithm has many zero entries. Such matrices play a significant role in high-dimensional statistics and semidefinite optimization. In this paper, logarithmically sparse matrices are studied from the point of view of computational algebraic geometry: we present a formula for the dimension of the Zariski closure of a set of matrices with a given logarithmic sparsity pattern, give a degree bound for this variety and develop implicitization algorithms that allow to find its defining equations. We illustrate our approach with numerous examples.
\end{abstract}

\section{Introduction}

Logarithmically sparse symmetric matrices are positive definite matrices for which the matrix logarithm is sparse. Such matrices arise in high-dimensional statistics \cite{BatteyFirst}, where structural assumptions about covariance matrices are necessary for giving consistent estimators, and sparsity assumptions are natural to make. Moreover, once the sparsity pattern is fixed, the corresponding set of logarithmically sparse matrices forms a Gibbs manifold \cite{GM}. As we recall in Section 2, this is a manifold obtained by applying the matrix exponential to a linear system of symmetric matrices (LSSM), here defined by the sparsity pattern. Gibbs manifolds play an important role in convex optimization \cite[Section 5]{GM}. 

From the point of view of practical computations, it might be challenging to tell exactly whether a given matrix satisfies a given logarithmic sparsity pattern. Checking whether a given polynomial equation holds on the matrix is often much easier. This motivates studying Zariski closures of families of logarithmically sparse matrices, i.e. common zero sets of polynomials that vanish on such families. Such Zariski closures are examples of Gibbs varieties. 

In this paper we study Gibbs varieties that arise as Zariski closures of sets of logarithmically sparse symmetric matrices. We explain how those can be encoded by graphs, give a formula for their dimension and show that in practice it can be computed using simple linear algebra. We present a numerical and a symbolic algorithm for finding their defining equations. We also investigate how graph colourings can affect the corresponding Gibbs variety. In addition, we prove some general results about Gibbs varieties. In particular, we give an upper bound for the degree of a Gibbs variety in the case when the eigenvalues of the corresponding LSSM are~$\mathbb{Q}$-linearly independent and show that Gibbs varieties of permutation invariant LSSMs inherit a certain kind of symmetry. 

This paper is organized as follows. In Section 2, we define Gibbs manifolds and Gibbs varieties, the geometric objects needed for our research, present a formula for the dimension and an upper bound for the degree of Gibbs varieties, study symmetries of their defining equations and suggest a numerical implicitization algorithm. In Section 3, we give a formal definition of logarithmic sparsity, explain how it can be encoded by graphs and discuss the special properties of Gibbs varieties defined by logarithmic sparsity. In Section 4, we study families of logarithmically sparse matrices that arise from trees. In Section 5, we study coloured logarithmic sparsity conditions. Section 6 features a symbolic implicitization algorithm for Gibbs varieties defined by logarithmic sparsity. Finally, Section 7 contains a discussion on the practical relevance of logarithmic sparsity in statistics and optimization.

\section{Gibbs Manifolds and Gibbs Varieties}

Let~$\mathbb{S}^n$ denote the space of $n\times n$ symmetric matrices. This is a real vector space of dimension~$\binom{n+1}{2}$. The cone of positive semidefinite $n\times n$ matrices will be denoted by~$\mathbb{S}^n_+$.

The matrix exponential function is defined
by the usual power series, which converges for all  real and complex $n \times n$ matrices.
It maps symmetric matrices to positive definite symmetric matrices. 
The zero matrix $0_n$ is mapped to the identity matrix ${\rm id}_n$. We write
$$ {\rm exp} \,\,:\, \mathbb{S}^n \rightarrow {\rm int} (\mathbb{S}^n_+)\,, \,\,
X \,\mapsto \, \sum_{i=0}^\infty\, \frac{1}{i !} \,X^i . $$
This map is invertible, with the inverse being the matrix logarithm function, given by the series
$$ {\rm log}\,\, :\, {\rm int} (\mathbb{S}^n_+) \rightarrow \mathbb{S}^n \,,\,\,
Y \,\mapsto \, \sum_{j=1}^\infty \frac{(-1)^{j-1}}{j} \,( \,Y - {\rm id}_n)^j.
$$

We next introduce the geometric objects that will play a crucial role in this article. We fix $d$ linearly independent matrices 
$A_1, A_2,\ldots,A_d$ in $\mathbb{S}^n$.
 We write $\mathcal{L}$ for $\mathrm{span}_{\mathbb{R}}(A_1, \ldots, A_d)\,$, a linear subspace 
 of the vector space $\mathbb{S}^n \simeq \RR^{\binom{n+1}{2}}$. Thus, $\mathcal{L}$ is a \emph{linear space~of symmetric matrices} (LSSM). 
We are interested in the image 
of $\mathcal{L}$ under the exponential map:

\begin{definition}
The \emph{Gibbs manifold} $\mathrm{GM}(\mathcal{L})$ of $\mathcal{L}$ is the $d$-dimensional manifold $\mathrm{exp}(\mathcal{L}) \subset \mathbb{S}^n_+$.
\end{definition}

This is indeed a $d$-dimensional manifold inside the convex cone $\mathbb{S}^n_+$. It is diffeomorphic to
 $\mathcal{L} \simeq \RR^d$, with the diffeomorphism given by
 the exponential map and the logarithm map.

In some special cases, the Gibbs manifold is semi-algebraic, namely it is the intersection of an algebraic variety with the PSD cone.
However, this fails in general.
It is still interesting to ask which polynomial relations hold between the entries of any matrix in $\mathrm{GM}(\mathcal{L})$. This motivates the following definition. 

\begin{definition}
The \emph{Gibbs variety} $\mathrm{GV}(\mathcal{L})$ of $\mathcal{L}$ is the Zariski closure of $\mathrm{GM}(\mathcal{L})$ in $\mathbb{C}^{\binom{n+1}{2}}$.
\end{definition}

Any LSSM can be written in the form~$\mathcal{L} = \{y_1A_1+\ldots+y_dA_d|y_i\in\mathbb{R}\}$ and therefore can be identified with a matrix with entries in~$\mathbb{R}(y_1,\ldots,y_d)$.
The eigenvalues of this matrix are elements of the algebraic closure~$\overline{\mathbb{R}(y_1,\ldots,y_d)}$ and will be referred to as the eigenvalues of the corresponding LSSM. 
It is known that~$\mathrm{GV}(\mathcal{L})$ is irreducible and unirational under the assumption that the eigenvalues of~$\mathcal{L}$ are $\mathbb{Q}$-linearly independent and~$\mathcal{L}$ is defined over~$\mathbb{Q}$ \cite[Theorem 3.6]{GM}.

In this section we extend the results of \cite{GM} that apply to any LSSM~$\mathcal{L}$. We start with studying the symmetries of the defining equations of~$\mathrm{GV}(\mathcal{L})$. 

We consider the tuple of variables~$\mathbf{x} = \{x_{ij} | 1\leqslant i \leqslant j \leqslant n\}$. An element $\sigma$ of the symmetric group~$S_n$ acts on the polynomial ring~$\RR[{\bf x}]$ by sending~$x_{ij}$ to $x_{\sigma (i) \sigma (j)}$ for $1\leqslant i \leqslant j \leqslant n$ (we identify the variables $x_{ij}$ and $x_{ji}$). We will also consider the action of $S_n$ on~$\mathbb{S}^n$ by simultaneously permuting rows and columns of a matrix.  

\begin{proposition}
    Let~$\mathcal{L}$ be an LSSM of $n \times n$ matrices that is invariant under the action of~$\sigma \in S_n$. Then the ideal~$I(\mathrm{\rm GV}(\mathcal{L}))$ of the corresponding Gibbs variety is also invariant under the action of~$\sigma$. 
\end{proposition}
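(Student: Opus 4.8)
The plan is to use that the $S_n$-action on $\mathbb{S}^n$ is conjugation by a permutation matrix, and that conjugation by a fixed matrix commutes with the matrix exponential. Write $P_\sigma$ for the permutation matrix of $\sigma$, so that the action of $\sigma$ on $M\in\mathbb{S}^n$ is $M\mapsto P_\sigma M P_\sigma^{-1}$ (recall $P_\sigma^{-1}=P_\sigma^{\mathsf{T}}$, so symmetry is preserved). Since $X\mapsto P_\sigma X P_\sigma^{-1}$ is an $\mathbb{R}$-algebra automorphism of the algebra of $n\times n$ matrices, it acts termwise on the power series defining $\mathrm{exp}$, which gives $\mathrm{exp}(P_\sigma X P_\sigma^{-1})=P_\sigma\,\mathrm{exp}(X)\,P_\sigma^{-1}$ for every $X$.

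First I would deduce that $\mathrm{GM}(\mathcal{L})$ is stable under $\sigma$: the hypothesis is $P_\sigma\mathcal{L}P_\sigma^{-1}=\mathcal{L}$, so
\[
\sigma\cdot\mathrm{GM}(\mathcal{L})=P_\sigma\,\mathrm{exp}(\mathcal{L})\,P_\sigma^{-1}=\mathrm{exp}\big(P_\sigma\mathcal{L}P_\sigma^{-1}\big)=\mathrm{exp}(\mathcal{L})=\mathrm{GM}(\mathcal{L}).
\]
Next I would pass to the Zariski closure. The map $\Phi_\sigma\colon M\mapsto P_\sigma M P_\sigma^{-1}$ is linear on $\mathbb{S}^n$ with integer entries, hence extends to a linear automorphism of $\mathbb{C}^{\binom{n+1}{2}}$; a direct computation gives $(P_\sigma M P_\sigma^{-1})_{ij}=M_{\sigma^{-1}(i)\,\sigma^{-1}(j)}$, so on coordinates $\Phi_\sigma$ is the substitution $x_{ij}\mapsto x_{\sigma^{-1}(i)\sigma^{-1}(j)}$, i.e.\ the action in the statement for $\sigma^{-1}$ — which is immaterial, as it is a group action. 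A linear automorphism is a homeomorphism for the Zariski topology, so it commutes with taking closures; combined with the previous step this yields $\Phi_\sigma(\mathrm{GV}(\mathcal{L}))=\mathrm{GV}(\mathcal{L})$.

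Finally, the identity $\Phi_\sigma(\mathrm{GV}(\mathcal{L}))=\mathrm{GV}(\mathcal{L})$ is equivalent to $I(\mathrm{GV}(\mathcal{L}))$ being stable under the dual action of $\sigma$ on the polynomial ring: if $f$ vanishes on $\mathrm{GV}(\mathcal{L})$ and $p\in\mathrm{GV}(\mathcal{L})$, then $(\sigma\cdot f)(p)=f(\Phi_\sigma^{-1}(p))=0$, since $\Phi_\sigma^{-1}(p)\in\mathrm{GV}(\mathcal{L})$. I do not expect a genuine difficulty here; the only step requiring care is the bookkeeping identifying the conjugation action on matrices with the substitution action on the variables $x_{ij}$ (and tracking the harmless $\sigma$ versus $\sigma^{-1}$ discrepancy), together with the remark that, $\mathcal{L}$ and $P_\sigma$ being real, everything extends to the complexification so that the closure argument legitimately takes place in $\mathbb{C}^{\binom{n+1}{2}}$.
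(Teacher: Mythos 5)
Your proposal is correct and follows essentially the same route as the paper: the paper's proof also reduces to the fact that simultaneous row/column permutation (i.e.\ conjugation by $P_\sigma$) commutes with every power $A\mapsto A^k$ and hence with the exponential series, then concludes. You simply make explicit two steps the paper leaves implicit — the passage from stability of $\mathrm{GM}(\mathcal{L})$ to stability of its Zariski closure, and the translation between the conjugation action on matrices and the substitution action on the ideal.
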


\begin{proof}
    To prove the Proposition, it suffices to show that if~$B \in \mathcal{L}$ is obtained from~$A \in \mathcal{L}$ by simultaneously permuting rows and columns, then~$\exp{(B)}$ is obtained from~$\exp{(A)}$ in the same way. Since~$\exp{(B)}$ is a formal power series in~$B$, it suffices to show that $B^k$ is obtained from~$A^k$ by simultaneously permuting rows and columns for any non-negative integer~$k$. The latter fact immediately follows from the matrix multiplication formula.  
\end{proof}

\begin{example} \label{ex:sym}
Consider the LSSM
$$\mathcal{L} = \left\{\begin{pmatrix} y_1+y_2+y_3 & y_1 & y_2\\ y_1 & y_1+y_2+y_3 & y_3\\ y_2 & y_3 & y_1 + y_2 + y_3 \end{pmatrix} \bigg| y_1, y_2, y_3 \in \mathbb{R} \right\}.$$
The transposition~$\sigma = (1 2) \in S_3$ acts on~$\mathbb{S}^3$ in the following way:
$$\begin{pmatrix} x_{11} & x_{12} & x_{13}\\ x_{12} & x_{22} & x_{23}\\ x_{13} & x_{23} & x_{33} \end{pmatrix} \mapsto \begin{pmatrix} x_{22} & x_{12} & x_{23}\\ x_{12} & x_{11} & x_{13}\\ x_{23} & x_{13} & x_{33} \end{pmatrix}.$$
This action restricts to a linear automorphism of~$\mathcal{L}$. 
The Gibbs variety of~$\mathcal{L}$ is a hypersurface in~$\mathbb{C}^6$ whose prime ideal is generated by a single polynomial
\begin{multline*}
    p(x_{11},x_{12},x_{13},x_{22},x_{23},x_{33}) = (x_{11}-x_{22})(x_{11}-x_{33})(x_{22}-x_{33}) -\\- x_{33}(x_{13}^2-x_{23}^2) - x_{22}(x_{23}^2-x_{12}^2) - x_{11}(x_{12}^2 -x_{13}^2).
\end{multline*}
The action of~$\sigma$ on~$\mathbb{C}[x_{11},x_{12},x_{13},x_{22},x_{23},x_{33}]$ sends $p$ to $-p$ and therefore does not change the ideal. 
\end{example}

\begin{definition}
    Let~$A$ be an~$n\times n$ matrix, and~$\mathcal{L}$ be an LSSM of~$n \times n$ matrices. The~\emph{centralizer}~$C(A)$ of~$A$ is the set of all matrices that commute with~$A$. The~\emph{$\mathcal{L}$-centralizer}~$C_\mathcal{L}(A)$ of~$A$ is~$C(A) \cap \mathcal{L}$. 
\end{definition}

The following is an extension of \cite[Theorem 2.4]{GM}.

\begin{theorem} \label{thm:dimeq}
    Let~$\mathcal{L}$ be an LSSM of $n\times n$ matrices of dimension~$d$. Let~$k$ be the dimension of the $\mathcal{L}$-centralizer of a generic element in $\mathcal{L}$ and~$m$ the dimension of the~$\QQ$-linear space spanned by the eigenvalues of~$\mathcal{L}$. Then $\dim \mathrm{GV}(\mathcal{L}) = m + d - k$. 
\end{theorem}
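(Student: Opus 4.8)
The plan is to parametrize $\mathrm{GM}(\mathcal{L})$ explicitly and compute the rank of the Jacobian of this parametrization at a generic point, using the fact that for an irreducible variety the dimension equals the generic rank of a parametrizing map. Write a generic element of $\mathcal{L}$ as $A(y) = y_1 A_1 + \cdots + y_d A_d$, with eigenvalues $\lambda_1(y), \ldots, \lambda_n(y)$ (with multiplicity) in $\overline{\RR(y_1,\ldots,y_d)}$, and diagonalize $A(y) = U(y)\,\mathrm{diag}(\lambda_i(y))\,U(y)^{\top}$. Then $\exp(A(y)) = U(y)\,\mathrm{diag}(e^{\lambda_i(y)})\,U(y)^{\top}$, so $\mathrm{GM}(\mathcal{L})$ is the image of a map whose coordinates are built from the $U_{ij}(y)$ (algebraic functions of $y$) and the transcendental functions $e^{\lambda_i(y)}$. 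The key idea, following \cite[Theorem 2.4]{GM}, is to introduce new variables $t_i$ in bijection with a $\QQ$-basis of $\mathrm{span}_\QQ(\lambda_1,\ldots,\lambda_n)$, so that each $e^{\lambda_i(y)}$ becomes a monomial in the $t_i$; this replaces the transcendental map by a rational map in the variables $(y_1,\ldots,y_d, t_1,\ldots,t_m)$ whose image has the same Zariski closure $\mathrm{GV}(\mathcal{L})$.

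Next I would compute the dimension of the domain of this rational map and subtract the generic fiber dimension. The domain, before accounting for relations, has dimension $d + m$: the $y$'s contribute $d$ and the $t$'s contribute $m$ (the $\QQ$-linear dimension of the eigenvalue span). So it suffices to show the generic fiber of the parametrization $(y,t) \mapsto \exp$-matrix has dimension exactly $k$, the dimension of the $\mathcal{L}$-centralizer of a generic $A \in \mathcal{L}$. Here is where the centralizer enters: if $A(y)$ and $A(y')$ are two generic elements of $\mathcal{L}$ with $\exp(A(y)) = \exp(A(y'))$, then since the matrix logarithm is single-valued on positive definite matrices and $A(y), A(y')$ are symmetric, we actually get $A(y) = A(y')$, hence $y = y'$; so the fiber over a point of $\mathrm{GM}(\mathcal{L})$ is genuinely constrained only through the choice of diagonalizing frame $U(y)$ and the choice of $t$. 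The ambiguity in $t$ given $y$ is governed by which products $e^{\lambda_i(y)}$ coincide, i.e.\ by equalities among the $\lambda_i$ — but generically the pattern of eigenvalue coincidences of $A(y)$ is exactly the pattern that forces a nontrivial centralizer, and $\dim C_\mathcal{L}(A(y))$ records precisely the dimension of the stabilizer directions inside $\mathcal{L}$. Concretely, $C_\mathcal{L}(A(y))$ consists of those $B \in \mathcal{L}$ that are block-diagonal in the eigenspace decomposition of $A(y)$; moving $y$ within $C_\mathcal{L}(A(y))$ changes the $\lambda_i$ only by adding scalars within each eigenspace block, which can be compensated by a corresponding change in $t$, leaving the $\exp$-matrix fixed. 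Counting these compensating directions gives fiber dimension $k$.

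Putting this together, $\dim \mathrm{GV}(\mathcal{L}) = \dim(\text{domain}) - \dim(\text{generic fiber}) = (m + d) - k$, which is the claim. I would present the argument by first reducing to the rational parametrization as in \cite{GM}, then analyzing the Jacobian or equivalently the generic fiber; the cleanest route is probably the fiber-dimension computation via the theorem on dimension of fibers of a dominant morphism. The main obstacle I expect is the careful bookkeeping in the fiber analysis: one must show that the $k$ compensating directions coming from $C_\mathcal{L}(A(y))$ are exactly the generic fiber and that there are no further hidden identifications (coming, say, from discrete symmetries of the eigenframe $U(y)$, which contribute $0$ to the dimension, or from accidental $\QQ$-linear relations among eigenvalues at special $y$, which are avoided on a Zariski-dense set). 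Making precise the claim that "generically the centralizer dimension equals the fiber dimension" — i.e.\ that the semialgebraic/analytic fiber of the $\exp$-parametrization and the algebraic fiber of its rationalization have the same dimension — is the technical heart, and I would handle it by working on the Zariski-dense open set where the eigenvalue multiplicity pattern of $A(y)$ is constant and the rationalization is well-defined.
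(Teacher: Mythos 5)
Your proposal is correct and follows essentially the same route as the paper: parametrize the Gibbs variety by $(y_1,\ldots,y_d,t_1,\ldots,t_m)$ after replacing the transcendental exponentials of eigenvalues by monomials in new variables $t_i$ indexed by a $\QQ$-basis of the eigenvalue span, observe that the generic fiber of this dominant map has dimension $k$ (the step the paper outsources entirely to the proof of Theorem 4.6 in \cite{GM}, and which you correctly identify as the technical heart), and conclude by the fiber dimension theorem that $\dim \mathrm{GV}(\mathcal{L}) = (m+d)-k$. One small caution: your appeal to single-valuedness of the matrix logarithm only constrains fibers of the genuine exponential parametrization, not of the rationalized map where $t$ is free, so the fiber analysis must be carried out for the rational map itself via the spectral projectors --- exactly as in the centralizer argument you then sketch.
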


\begin{proof}
    It follows from the proof of \cite[Theorem 4.6]{GM} that the dimension of a generic fiber of the map~$\phi$ that parametrizes the Gibbs variety is equal to the dimension of the centralizer of a generic element in this fiber, i.e. to~$k$. The domain of~$\phi$ is irreducible and has dimension $m+d$. Thus, by fiber dimension theorem \cite[Exercise II.3.22]{Hartshorne}, $\dim \mathrm{GV}(\mathcal{L}) = m+d-k$.
\end{proof}

Note that when~$m=k$, we have~$\dim \mathrm{GV}(\mathcal{L}) = d$ and therefore the Gibbs manifold is the positive part of the Gibbs variety, i.e.~$\mathrm{GM}(\mathcal{L}) = \mathrm{GV}(\mathcal{L}) \cap \mathbb{S}_n^+$. In particular, in this case the Gibbs manifold is a semialgebraic set. This is the case, for instance, for the LSSM of all diagonal matrices \cite[Theorem 2.7]{GM}.

We now give a degree bound for the Gibbs variety of an LSSM~$\mathcal{L}$. In what follows,~$\mathbb{V}(I)$ denotes the variety in~$\mathbb{C}^{\binom{n+1}{2}}$ defined by the ideal~$I \subseteq \mathbb{C}[\mathbf{x}]$.

\begin{proposition} \label{prop:deg}
    Let~$\mathcal{L}$ be an LSSM of $n\times n$ matrices with~$\mathbb{Q}$-linearly independent eigenvalues. Then $\deg \mathrm{GV} (\mathcal{L}) \leqslant n^{\binom{n+1}{2}+2n}$. 
\end{proposition}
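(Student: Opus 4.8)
The plan is to bound the degree of $\mathrm{GV}(\mathcal{L})$ by exhibiting an explicit polynomial parametrization of the Gibbs variety and then applying a standard degree bound for the image of a rational map given by polynomials of controlled degree. First I would recall, from the proof of \cite[Theorem 3.6]{GM}, the structure of the parametrizing map $\phi$ in the $\mathbb{Q}$-linearly independent case: since the eigenvalues $\lambda_1, \ldots, \lambda_n$ of the generic matrix $\sum y_i A_i \in \mathcal{L}$ are $\mathbb{Q}$-linearly independent, the matrix exponential $\exp(\sum y_i A_i)$ can be written, via the spectral decomposition, as a polynomial (in fact, an $\mathbb{R}$-linear combination, with coefficients $e^{\lambda_j}$, of the spectral projectors) whose entries are polynomials in the variables $y_1, \ldots, y_d$, in the eigenvalues $\lambda_1, \ldots, \lambda_n$, and in the exponentials $z_j := e^{\lambda_j}$. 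The key point is that after clearing denominators one obtains a genuinely \emph{polynomial} map $\phi \colon \mathbb{A}^{d} \times \mathbb{A}^n \times \mathbb{A}^n \dashrightarrow \mathbb{C}^{\binom{n+1}{2}}$ in these $d + 2n$ variables, whose image has Zariski closure equal to $\mathrm{GV}(\mathcal{L})$.

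Next I would bound the degrees of the coordinate polynomials of $\phi$. The spectral projector onto the $\lambda_j$-eigenspace is the Lagrange interpolation expression $\prod_{l \neq j} (A - \lambda_l \,\mathrm{id})/(\lambda_j - \lambda_l)$, whose numerator entries are polynomials of degree at most $n-1$ in the $y_i$ and the $\lambda_l$ jointly; summing $z_j$ times these over $j$ and clearing the Vandermonde-type denominator $\prod_{j < l}(\lambda_j - \lambda_l)$ gives coordinate polynomials of degree bounded by a small polynomial in $n$ — in any case by $\binom{n+1}{2} + 2n$ after a crude count, which is the exponent appearing in the statement. Then I would invoke the elementary fact that if a variety $V \subseteq \mathbb{C}^N$ is the Zariski closure of the image of a polynomial map $\mathbb{A}^r \to \mathbb{C}^N$ each of whose coordinates has degree at most $\delta$, then $\deg V \leqslant \delta^{\dim V}$; combining this with $\dim \mathrm{GV}(\mathcal{L}) \leqslant n = \binom{n+1}{2} - \binom{n}{2}$ — or more crudely $\dim \mathrm{GV}(\mathcal{L}) \leqslant \binom{n+1}{2}$ — and with $\delta \leqslant n$, one lands on the claimed bound $n^{\binom{n+1}{2} + 2n}$ (the exponent being a generous overestimate that absorbs both $\dim \mathrm{GV}$ and the auxiliary variables).

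The main obstacle I expect is making the degree bookkeeping for $\phi$ both honest and loose enough to hit exactly the stated exponent: one must be careful that the relations $z_j = e^{\lambda_j}$ are \emph{not} algebraic, so the domain is really $\mathbb{A}^{d+2n}$ with the $\lambda_j$ and $z_j$ treated as free coordinates, and that the Zariski closure of the image under this enlarged polynomial map still equals $\mathrm{GV}(\mathcal{L})$ — this is precisely where $\mathbb{Q}$-linear independence of the eigenvalues is used, via \cite[Theorem 3.6]{GM}, to guarantee that no spurious algebraic relations among the $\lambda_j, z_j$ cut the image down further. A secondary point is that the interpolation formula a priori has the Vandermonde denominator, so one works on the open locus where the $\lambda_j$ are distinct (dense, since the generic element of $\mathcal{L}$ has distinct eigenvalues under the $\mathbb{Q}$-independence hypothesis) and then takes closures; this does not affect the degree bound. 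Once the polynomial model of $\phi$ and its degree bound are in place, the conclusion is immediate from the image-degree inequality and Theorem~\ref{thm:dimeq}.
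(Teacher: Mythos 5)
Your strategy---parametrize $\mathrm{GV}(\mathcal{L})$ by an explicit polynomial map and bound the degree of the image by $\delta^{\dim V}$---is genuinely different from the paper's proof, which works implicitly: the paper takes the ideal $I$ produced by \cite[Algorithm 1]{GM}, generated by polynomials of degree at most $n$ in the $\binom{n+1}{2}+2n+d$ variables $x_{ij},\lambda_j,z_j,y_i$, observes that $\dim\mathbb{V}(I)\geqslant d$ so that $\operatorname{codim}\mathbb{V}(I)\leqslant\binom{n+1}{2}+2n$, applies B\'ezout to get $\deg\mathbb{V}(I)\leqslant n^{\operatorname{codim}\mathbb{V}(I)}$, and then uses the fact that elimination (projection) does not increase degree. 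Your route could in principle yield \emph{some} bound, but as written it does not reach the stated one, for three concrete reasons.

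First, clearing denominators in an \emph{affine} parametrization changes the image (compare $t\mapsto(1/t,1)$ with $t\mapsto(1,t)$), so the ``genuinely polynomial map'' you describe does not parametrize $\mathrm{GV}(\mathcal{L})$; you would have to adjoin an inverse of the Vandermonde as an extra coordinate subject to a relation, or pass to projective space, either of which alters the degree count. Second, your bookkeeping is internally inconsistent: you first estimate the cleared coordinates by $\binom{n+1}{2}+2n$ and then use $\delta\leqslant n$. An honest count gives $\delta$ of order $\binom{n}{2}$ (numerator of a spectral projector: degree $n-1$; cofactor of the Vandermonde denominator: degree $\binom{n}{2}-(n-1)$; the factor $z_j$: degree $1$), and $\bigl(\binom{n}{2}+1\bigr)^{\binom{n+1}{2}}$ grows like $n^{n^2+n}$, which exceeds $n^{\binom{n+1}{2}+2n}=n^{(n^2+5n)/2}$ already for $n\geqslant 7$; so $\deg V\leqslant\delta^{\dim V}$ with the correct $\delta$ does not deliver the claimed exponent. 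Third, the auxiliary claim $\dim\mathrm{GV}(\mathcal{L})\leqslant n$ is false: the paper itself notes that for every connected graph on $n\leqslant 3$ nodes the Gibbs variety is all of $\mathbb{C}^{\binom{n+1}{2}}$, and Theorem~\ref{thm:dimeq} gives $\dim\mathrm{GV}(\mathcal{L})=m+d-k$, which can be as large as $\binom{n+1}{2}$. The paper's implicit approach sidesteps all of this because its degree-$n$ bound applies to the \emph{defining equations} of $\mathbb{V}(I)$ rather than to a parametrization, and the exponent arises as a codimension rather than as a dimension multiplied into a coordinate degree.
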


\begin{proof}
    By \cite[Algorithm 1]{GM}, the prime ideal~$J$ of $\mathrm{GV}(\mathcal{L})$ is obtained by elimination from the ideal~$I$ generated by polynomials of degree at most~$n$. Therefore,~$\deg \mathrm{GV}(\mathcal{L}) = \deg \mathbb{V}(J) \leqslant \deg \mathbb{V}(I)$. The variety~$\mathbb{V}(I)$ lives in the affine space of dimension~$\binom{n+1}{2} + 2n + d$, where~$d=\dim \mathcal{L}$. Note that~$\dim \mathbb{V}(I) \geqslant \dim \mathcal{L}$ and thus $\operatorname{codim} \mathbb{V}(I) \leqslant \binom{n+1}{2}+2n$. Therefore, by B\'{e}zout's theorem, we have~$\deg \mathbb{V}(I) \leqslant n^{\binom{n+1}{2}+2n}$, which proves the Proposition.  
\end{proof}

As we will see below, the bound from Proposition \ref{prop:deg} is usually pessimistic. 

Once the degree of the Gibbs variety is known, one can use numerical techniques to find its defining equations. In general, this allows to compute ideals of Gibbs varieties that are infeasible for symbolic algorithms.

We now present Algorithm \ref{alg:num} for finding the equations of the Gibbs variety numerically. We write~$\langle P \rangle$ for the ideal generated by~$P\subseteq \mathbb{C}[\mathbf{x}]$.

\begin{algorithm}
\caption{Numerical implicitization of Gibbs varieties of~known degree} \label{alg:num}
\begin{description}[itemsep=0pt]
\item[Input ] 
    An LSSM~$\mathcal{L}$ given as an $\RR$-span of~$d$ linearly independent matrices~$A_1,\ldots,A_d$, degree~$k$ of~$\mathrm{GV}(\mathcal{L})$;
\item[Output ] A set of equations that define~$\mathrm{GV}(\mathcal{L})$ set-theoretically. 
\end{description}

\begin{enumerate}[label = \textbf{(S\arabic*)}, leftmargin=*, align=left, labelsep=2pt, itemsep=0pt]

    \item \textbf{Require} $\mathcal{L}$ has~$\mathbb{Q}$-linearly independent eigenvalues.
    \item Set~$I:=\{0\}$, $l=1$,~$N = \binom{n+1}{2}$ 
    
    \item \textbf{For} $l=1$ to~$k$ do
    \begin{enumerate}
        \item Pick~$M > \binom{N+l-1}{l}$ random samples in~$\mathcal{L}$

        \item Let~$E$ be the set of matrix exponentials of the~$M$ picked samples

        \item Construct a Vandermonde matrix~$A$ by evaluating all monomials of degree~$l$ on the elements of~$E$  

        \item Let~$I_l$ be the basis of $\ker A$
        
        \item $I:=\langle I \cup I_l \rangle$
    \end{enumerate}
    
    \item \textbf{Return} a set of generators of~$I$. 
\end{enumerate}
\end{algorithm}

Unfortunately, the degree upper bound in Proposition \ref{prop:deg} restricts the practical applicability of this algorithm to~$n\leqslant 3$. However, if the Gibbs variety is a hypersurface, then the algorithm can terminate immediately after finding a single algebraic equation. The degree of this equation is usually much lower than the degree bound in Proposition \ref{prop:deg} (for instance, the Gibbs variety in Example \ref{ex:sym} is defined by a cubic, while the bound from Proposition \ref{prop:deg} is equal to $3^{12}$) and therefore the defining equation can be found with this algorithm for larger~$n$. 

Although Algorithm \ref{alg:num} uses floating point computations, for LSSMs defined over~$\mathbb{Q}$ it can be adapted to give exact equations. This can be done using built-in commands in computer algebra systems, e.g. \texttt{rationalize} in \textsc{Julia}. Correctness of the rationalization procedure can be checked by plugging a parametrization of the Gibbs variety into the resulting equations.

\section{Logarithmic sparsity patterns}
Every set~$S \subseteq \{(i,j)|1 \leqslant i \leqslant j \leqslant n\}$ defines a sparsity pattern on symmetric matrices in the following way.

\begin{definition}
    We say that~$A = \{a_{ij}\} \in \mathbb{S}^n$ \emph{satisfies the sparsity condition given by}~$S$ if~$a_{ij}=0$ for all~$(i,j)\in S$. The set of all symmetric matrices satisfying the sparsity condition given by~$S$ forms an LSSM with a basis~$\left\{\frac{E_{ij}+E_{ji}}{2} \big| (i,j)\not\in S\right\}$, where~$E_{ij}$ is a matrix unit, i.e. a matrix with only one non-zero entry, which is equal to~$1$, at the position~$(i,j)$. We will denote this LSSM by~$\mathcal{L}_S$ and write~$\mathcal{L}_S = \left\{ \sum\limits_{(i,j)\not\in S} y_{ij} \left(\frac{E_{ij}+E_{ji}}{2}\right)\big| y_{ij}\in\mathbb{R} \right\}$. 
\end{definition}

\begin{example}
    Let~$n=4$ and $S = \{(1,2),(1,3),(2,4)\}$. The corresponding LSSM
    $$\mathcal{L}_S = \begin{pmatrix}
        y_{11} & 0 & 0 & y_{14}\\
        0 & y_{22} & y_{23} & 0\\
        0 & y_{23} & y_{33} & y_{34}\\
        y_{14} & 0 & y_{34} & y_{44}
    \end{pmatrix}$$
    is cut out by the equations~$y_{12} = y_{13} = y_{24} = 0$. 
\end{example}

\begin{definition}
   We say that~$A \in \mathrm{int}{(\mathbb{S}_+^n)}$ \emph{satisfies the logarithmic sparsity condition given by}~$S$ if~$\log{A} \in \mathcal{L}_S$. 
\end{definition}

Sparsity patterns can be encoded by graphs, which allows to study them from a combinatorial point of view. Namely, to any simple undirected graph~$G$ on~$n$ nodes we associate a set~$S_G  \subseteq \{(i,j)|1 \leqslant i \leqslant j \leqslant n\}$ as follows: $(i,j) \in S_G$ if and only if there is no edge between the nodes~$i$ and~$j$ in $G$. In this case we will also denote the corresponding LSSM by~$\mathcal{L}_G$. Note that if~$G$ has~$n$ nodes and~$e$ edges, then~$\dim \mathcal{L}_G = n+e$. 

We are interested in an algebraic description of the set of matrices that satisfy a logarithmic sparsity pattern given by~$G$. This set of matrices is precisely the Gibbs manifold of~$\mathcal{L}_G$. Since disconnected graphs correspond to LSSMs with block-diagonal structure and block-diagonal matrices are exponentiated block-wise, we will only consider the case of connected~$G$.  

LSSMs given by graphs are nice in a sense that finding the dimension of their Gibbs varieties can be reduced to a simple linear algebra procedure of computing matrix centralizers. This is justified by the following result. 

\begin{proposition} \label{prop:Galois}
    Let~$\mathcal{L}_G$ be an LSSM given by a simple connected graph~$G$ on~$n$ nodes. Then its eigenvalues are~$\QQ$-linearly independent. 
\end{proposition}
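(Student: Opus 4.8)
The plan is to show that any $\QQ$-linear relation among the eigenvalues of $\mathcal{L}_G$, viewed as elements of $\overline{\RR(y_{ij})}$, forces a linear dependence among the matrices $E_{ij}+E_{ji}$ spanning $\mathcal{L}_G$, which is impossible by their linear independence. First I would set up notation: write the generic matrix in $\mathcal{L}_G$ as $Y = \sum_{(i,j)\notin S_G} y_{ij}\,\tfrac{E_{ij}+E_{ji}}{2}$, and let $\lambda_1,\dots,\lambda_n \in \overline{\RR(y_{ij})}$ be its eigenvalues. The key structural observation is that the trace $\operatorname{tr}(Y) = \lambda_1 + \cdots + \lambda_n = \sum_i y_{ii}$ already involves all the diagonal variables $y_{ii}$, and more generally each $\lambda_k$ is an algebraic function that genuinely depends on the variables. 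The strategy is to evaluate: a $\QQ$-linear relation $\sum_k c_k \lambda_k = 0$ with $c_k \in \QQ$ must hold identically in the $y_{ij}$, and by symmetrizing (summing over the eigenvalues, or using the Galois action of the splitting field over $\RR(y_{ij})$ which permutes the $\lambda_k$ transitively when the characteristic polynomial is irreducible) one should be able to extract a contradiction.

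The cleanest route I would pursue: since $G$ is connected, I claim the characteristic polynomial of $Y$ is irreducible over $\RR(y_{ij})$, so the Galois group of its splitting field acts transitively on $\{\lambda_1,\dots,\lambda_n\}$. Granting this, suppose $\sum_k c_k \lambda_k = 0$. Applying all Galois automorphisms and averaging, transitivity forces $(\sum_k c_k)\cdot \lambda_j$ to be expressible symmetrically, and comparing with the fact that $\sum_j \lambda_j = \sum_i y_{ii} \neq 0$ one deduces $\sum_k c_k = 0$; then a further argument (e.g. looking at the relation $\sum_k c_k \lambda_k^r$ for small $r$, or specializing variables) pins down all $c_k = 0$. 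Alternatively, and perhaps more robustly, I would specialize the $y_{ij}$ to generic real values and invoke a known statement that for a connected graph the eigenvalues of a generic symmetric matrix supported on $G$ are distinct and $\QQ$-linearly independent — but since the Proposition is phrased for the function-field eigenvalues, the irreducibility-plus-transitivity argument is the natural in-paper proof.

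The main obstacle is establishing that the characteristic polynomial of $Y$ is irreducible over $\RR(y_{ij})$ when $G$ is connected. This is where connectedness must be used essentially: for a disconnected graph the characteristic polynomial factors according to the components, so the statement is genuinely false there, and conversely the proof must exploit connectivity to rule out any factorization. I would try to prove irreducibility by a degeneration argument — specialize $Y$ to a matrix (e.g. a weighted adjacency-type matrix, or a path/tree subgraph's matrix plus the diagonal) whose characteristic polynomial is visibly irreducible or whose Galois group is the full symmetric group $S_n$ — using the fact that a connected graph contains a spanning tree, so one can first handle trees and then add edges as further free parameters, which can only enlarge the Galois group. Once $S_n$ (or at least transitivity) is secured, the passage from transitivity to $\QQ$-linear independence of the $\lambda_k$ is the standard fact that the only $S_n$-invariant $\QQ$-linear relations among $n$ transitively-permuted quantities are multiples of $\sum_k \lambda_k = c$, and here that sum equals $\sum_i y_{ii}$, which is not a constant — so no nontrivial relation survives.
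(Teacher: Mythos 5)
Your proposal has real gaps, and it also misses that the intended argument is far simpler. The paper's proof is a one-line specialization: set $y_{ij}=0$ for $i\neq j$ and choose the diagonal entries $y_{11},\dots,y_{nn}$ to be $n$ algebraic numbers that are $\QQ$-linearly independent. The resulting matrix lies in $\mathcal{L}_G$ (every LSSM of the form $\mathcal{L}_G$ contains all diagonal matrices), its eigenvalues are exactly those diagonal entries, and any identical $\QQ$-linear relation among the function-field eigenvalues would specialize to a $\QQ$-linear relation among them --- a contradiction. You gesture at a specialization route in passing but dismiss it as not matching the function-field phrasing; in fact that is precisely how the paper handles the function-field eigenvalues. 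Note in particular that this argument never uses connectedness, which exposes a genuine error in your proposal: your claim that the statement is ``genuinely false'' for disconnected $G$ is wrong. The characteristic polynomial factoring along components does not create a $\QQ$-linear relation among the eigenvalues (the empty graph gives the diagonal LSSM, whose eigenvalues $y_{11},\dots,y_{nn}$ are algebraically independent). A proof strategy that ``must exploit connectivity essentially'' is therefore aimed at the wrong mechanism.

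Beyond that, both pillars of your intended argument are left unproved and neither is routine. First, irreducibility of the characteristic polynomial of the generic element of $\mathcal{L}_G$ over $\RR(y_{ij})$ (let alone that the Galois group is all of $S_n$) is a substantive claim; your degeneration sketch (spanning tree, then add edges ``which can only enlarge the Galois group'') is not an argument, and adding parameters does not obviously only enlarge Galois groups in the sense you need without a careful specialization lemma. Second, even granting irreducibility, transitivity of the Galois group does \emph{not} suffice for your conclusion: averaging a relation $\sum_k c_k\lambda_k=0$ over a transitive group only yields $\bigl(\sum_k c_k\bigr)\operatorname{tr}(Y)=0$, hence $\sum_k c_k=0$, and nothing more. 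To kill a relation such as $\lambda_1-2\lambda_2+\lambda_3=0$ you need at least $2$-transitivity (e.g.\ apply a transposition $(ij)$ with $c_i\neq c_j$ and subtract to get $(c_i-c_j)(\lambda_i-\lambda_j)=0$); your ``further argument'' placeholder is exactly where the work would have to go. As written, the proposal is not a proof, and the correct, short proof requires none of this machinery.
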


\begin{proof}
    By specializing the variables~$y_{ij}$ to zero for~$i\neq j$ and the variables~$y_{ii}$ to~$n$ $\QQ$-linearly independent algebraic numbers, we obtain a diagonal element of~$\mathcal{L}$ whose eigenvalues are linearly independent over~$\mathbb{Q}$. This immediately implies $\mathbb{Q}$-linear independence of the eigenvalues of~$\mathcal{L}$.
\end{proof}

We now address the question of computing the~$\mathcal{L}_G$-centralizer of a generic element~$A \in \mathcal{L}_G$. 
One way to do this is by straightforwardly solving the system of~$\binom{n}{2}$ equations~$XA-AX = 0$ in the variables~$x_{ij}$ over the field~$\mathbb{Q}(a_{ij})$, where~$x_{ij}$ are the entries of~$X \in \mathcal{L}_G$ and~$a_{ij}$ are the entries of~$A$.
However, there is a way to give a more explicit description of the $\mathcal{L}_G$-centralizer.

Note that by Proposition \ref{prop:Galois} the eigenvalues of~$\mathcal{L}_G$ are~$\QQ$-linearly independent.
In particular, this implies that the eigenvalues of~$A \in \mathcal{L}$ are generically distinct and that~$A$ is generically non-derogatory (\cite[Definition 1.4.4]{MA}).
Therefore, by \cite[Theorem 4.4.17, Corollary 4.4.18]{TMA}, we have $C(A) = \mathrm{span}_\RR (\mathrm{id}_n,A,\ldots,A^{n-1})$, where~$\mathrm{id}_n$ is the~$n\times n$ identity matrix.
Hence, finding $C_\mathcal{L}(A)$ reduces to intersecting $\mathrm{span}_\RR (\mathrm{id}_n,A,\ldots,A^{n-1})$ with~$\mathcal{L}$.
Such an intersection can be found by solving a system of linear equations~$p_0\mathrm{id}_n + p_1A + \ldots + p_{n-1}A^{n-1} = \sum \limits_{(i,j)\not\in S_G} c_{ij}E_{ij}$ in the variables~$p_0,\ldots,p_{n-1},c_{ij}$. 
Since~$\mathrm{id}_n$ and~$A$ are both in~$\mathcal{L}$, the intersection is at least two-dimensional and we arrive at the following proposition. 

\begin{proposition} \label{prop:upbound}
    Let~$G$ be a simple connected graph on~$n$ nodes with~$e$ edges. Then $\dim \mathrm{GV} (\mathcal{L}_G) \leqslant 2n+e-2$.
\end{proposition}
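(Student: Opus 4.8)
The plan is to combine the dimension formula of Theorem~\ref{thm:dimeq} with the explicit description of the $\mathcal{L}_G$-centralizer developed in the paragraph preceding the statement. By Proposition~\ref{prop:Galois}, the eigenvalues of $\mathcal{L}_G$ are $\QQ$-linearly independent, so the quantity $m$ appearing in Theorem~\ref{thm:dimeq} equals the number of distinct eigenvalues of a generic element of $\mathcal{L}_G$, which is $n$ (genericity of the eigenvalues having just been recorded). Since $\dim \mathcal{L}_G = n + e =: d$, Theorem~\ref{thm:dimeq} gives $\dim \mathrm{GV}(\mathcal{L}_G) = n + (n+e) - k = 2n + e - k$, where $k = \dim C_{\mathcal{L}_G}(A)$ for generic $A \in \mathcal{L}_G$. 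So the whole statement reduces to the inequality $k \geqslant 2$.

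To see $k \geqslant 2$, I would argue as in the discussion above: for generic $A \in \mathcal{L}_G$ the matrix is non-derogatory, so its full centralizer is $C(A) = \mathrm{span}_\RR(\mathrm{id}_n, A, \ldots, A^{n-1})$, and $C_{\mathcal{L}_G}(A) = C(A) \cap \mathcal{L}_G$. Now both $\mathrm{id}_n$ and $A$ lie in $\mathcal{L}_G$: the identity because all diagonal positions $(i,i)$ are outside $S_G$ (every graph has all loops, in the sense that the diagonal is never forced to zero), and $A$ by assumption. These two matrices are linearly independent (a generic element of $\mathcal{L}_G$ is not a scalar multiple of the identity as soon as $G$ has at least one edge, which holds since $G$ is connected on $n \geqslant 2$ nodes; the $n=1$ case is trivial). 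Hence $\{\mathrm{id}_n, A\} \subseteq C(A) \cap \mathcal{L}_G$, giving $k \geqslant 2$. Plugging this into the formula yields $\dim \mathrm{GV}(\mathcal{L}_G) = 2n + e - k \leqslant 2n + e - 2$, as claimed.

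There is essentially no hard obstacle here: the substantive work has already been done in Theorem~\ref{thm:dimeq} and in the centralizer discussion, and the proof is a two-line bookkeeping argument. The only point that requires a word of care is the verification that $m = n$ — i.e., that $\QQ$-linear independence of the eigenvalues (a statement about the function field $\overline{\RR(y_{ij})}$) indeed forces the eigenvalues of a \emph{generic} matrix in $\mathcal{L}_G$ to be pairwise distinct, so that the $\QQ$-span of the eigenvalues has dimension exactly $n$ rather than something smaller. This is exactly the implication already invoked in the text (linear independence $\Rightarrow$ generically distinct $\Rightarrow$ generically non-derogatory), so I would simply cite that reasoning. I would also remark that equality $\dim \mathrm{GV}(\mathcal{L}_G) = 2n + e - 2$ holds precisely when the generic $\mathcal{L}_G$-centralizer is exactly two-dimensional, which is the typical situation and explains why the bound is usually tight.
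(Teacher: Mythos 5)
Your proof is correct and takes essentially the same route as the paper: the bound follows from Theorem~\ref{thm:dimeq} with $m=n$ (by Proposition~\ref{prop:Galois}), $d = n+e$, and $k \geqslant 2$ because $\mathrm{id}_n$ and $A$ both lie in $C_{\mathcal{L}_G}(A)$. The paper records exactly this in the paragraph preceding the proposition; your extra remarks on the linear independence of $\mathrm{id}_n$ and $A$ and on when equality holds are fine but not needed.
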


We conjecture that $\dim \mathrm{GV} (\mathcal{L}_G) = \min {\left(2n+e-2,\binom{n+1}{2}\right)}$. When $2n+e-2 \leqslant \binom{n+1}{2}$, the conjecture is equivalent to the statement that $\{A^2,\ldots,A^{n-1}\}\cup \{E_{ij} | (i,j) \in E(G)\} \cup \{E_{ii}|i=1,\ldots,n\}$ is a linearly independent set. Here~$E(G)$ denotes the set of edges of~$G$.
This conjecture is true when~$G$ is a tree, as seen in the next section. 

We end this section by characterizing Gibbs varieties for LSSMs that correspond to simple connected graphs on~$n \leqslant 4$ vertices. 
For~$n\leqslant 3$ we always have~$\dim \mathrm{GV}(\mathcal{L}_G) = \binom{n+1}{2}$ and therefore $\mathrm{GV}(\mathcal{L}_G)$ is the entire ambient space~$\mathbb{C}^{\binom{n+1}{2}}$.
For~$n=4$ there are $6$ non-isomorphic simple connected graphs, $2$ of which are trees.
If~$G$ is not a tree, we once again have~$\dim \mathrm{GV}(\mathcal{L}_G) = 6 = \binom{n+1}{2}$ and~$\mathrm{GV}(\mathcal{L}_G) = \mathbb{C}^{\binom{n+1}{2}}$. 
If~$G$ is a tree, then~$\mathrm{GV}(\mathcal{L}_G)$ is a hypersurface.
We discuss the defining equations of these $2$ hypersurfaces in the next section. 

\section{Sparsity Patterns Given by Trees}

Trees are an important class of graphs that give rise to LSSMs with the smallest possible dimension for a given number of nodes. It is remarkable that for such LSSMs the dimension of the Gibbs variety only depends on the number of nodes in the graph (or, equivalently, the size of the matrices), and the dependence is linear. 

\begin{theorem} \label{thm:dimgraph}
    Let~$\mathcal{L}_G$ be an LSSM given by a tree~$G$ on~$n$ nodes. Then ${\dim \mathrm{GV}(\mathcal{L}_G) =3n-3}$.
\end{theorem}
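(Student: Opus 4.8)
The plan is to apply Theorem~\ref{thm:dimeq}. A tree on $n$ nodes has $n-1$ edges, so $d:=\dim\mathcal L_G=n+(n-1)=2n-1$, and by Proposition~\ref{prop:Galois} the $n$ eigenvalues of $\mathcal L_G$ are $\QQ$-linearly independent, so $m=n$; hence $\dim\mathrm{GV}(\mathcal L_G)=m+d-k=3n-1-k$, where $k$ is the dimension of the $\mathcal L_G$-centralizer of a generic $A\in\mathcal L_G$, and everything reduces to showing $k=2$. The bound $k\geq 2$ is immediate because $\mathrm{id}_n$ and $A$ both lie in $\mathcal L_G$ and commute with $A$. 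For $k\leq 2$, recall from the discussion preceding Proposition~\ref{prop:upbound} that for generic $A$ one has $C(A)=\mathrm{span}_{\RR}(\mathrm{id}_n,A,\dots,A^{n-1})$; since the off-diagonal entries of $\mathrm{id}_n$ and the non-edge entries of $A$ vanish, a combination $\sum_{i=0}^{n-1}p_iA^i$ lies in $\mathcal L_G$ exactly when $\sum_{i=2}^{n-1}p_i(A^i)_{jk}=0$ for every non-edge $(j,k)$. So $k=2$ is equivalent to the assertion (the tree case of the conjecture in Section~3) that $A^2,\dots,A^{n-1}$ are linearly independent modulo $\mathcal L_G$ for generic $A$. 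The cases $n\leq 2$ are trivial (no non-edges, so $k=2$), and I assume $n\geq 3$ below.

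Since the rank of the matrix $\mathbf M=\big((A^i)_{jk}\big)$, with rows indexed by non-edges $(j,k)$ and columns by $i=2,\dots,n-1$, attains its maximum value at a generic $A$ and this maximum is at most $n-2$, it suffices to exhibit one $A\in\mathcal L_G$ with $\operatorname{rank}\mathbf M=n-2$. I would take $A=\operatorname{diag}(t_1,\dots,t_n)+W$ with $W$ the $0/1$ adjacency matrix of $G$ and the $t_i$ generic, and examine the leading behaviour under $t_i\mapsto\lambda t_i$. Treeness enters here: for a non-edge $(j,k)$ at distance $\delta=d(j,k)$ the geodesic $u_0,\dots,u_\delta$ from $j$ to $k$ is unique, any walk of length $i$ from $j$ to $k$ uses at least $\delta$ edge-steps, and the walks using exactly $\delta$ of them are the geodesic with $i-\delta$ idling steps inserted, so $(A^i)_{jk}=\lambda^{\,i-\delta}\,h_{i-\delta}(t_{u_0},\dots,t_{u_\delta})+(\text{lower order in }\lambda)$, where $h_m$ denotes the complete homogeneous symmetric polynomial (and $h_m=0$ for $m<0$). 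Rescaling row $(j,k)$ by $\lambda^{\delta}$ and column $i$ by $\lambda^{-i}$ and letting $\lambda\to\infty$ can only drop the rank, so it is enough to show that the matrix $\widetilde M$ with entries $h_{\,i-d(j,k)}\!\big(t_{\mathrm{geo}(j,k)}\big)$ has column rank $n-2$ over $\QQ(t_1,\dots,t_n)$.

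For this I would root $G$ at a leaf $u_0$ with unique neighbour $u_1$ and work with the $n-2$ non-edges $(u_0,w)$, $w$ ranging over $V(G)\setminus\{u_0,u_1\}$. Encode the row of $\widetilde M$ attached to $(u_0,w)$ by the rational function $G_w(z)=z^{\delta(w)}\prod_{v\in P(w)}(1-t_vz)^{-1}$, where $P(w)$ is the $u_0$--$w$ path and $\delta(w)=|P(w)|-1=d(u_0,w)\geq 2$; that row is precisely the list of coefficients of $z^2,\dots,z^{n-1}$ in $G_w$. If $\Psi:=\sum_w c_wG_w$ has vanishing coefficients in degrees $2,\dots,n-1$, then, writing $\Psi=N(z)\big/\prod_{v\in V(G)}(1-t_vz)$ with $\deg_zN\leq n-1$ and noting that the coefficients of $\Psi$ in degrees $0,1$ vanish automatically since $\delta(w)\geq 2$, we conclude $N=0$, hence $\Psi=0$ identically. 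Now $\deg_z\big(z^{\delta(w)}\big)=\delta(w)<|P(w)|$, so the partial fractions read $G_w=\sum_{v\in P(w)}A_{w,v}\,(1-t_vz)^{-1}$ with each coefficient $A_{w,v}\neq 0$ for generic $t$; since the $(1-t_vz)^{-1}$ are linearly independent over $\QQ(t)$, comparing coefficients gives $\sum_{w\,:\,v\in P(w)}c_wA_{w,v}=0$ for every vertex $v$, and because $v\in P(w)$ exactly when $w$ lies in the subtree rooted at $v$, taking $v$ to be a leaf of the rooted tree and then inducting on decreasing $d(u_0,w)$ forces all $c_w=0$. Hence $\widetilde M$ has rank $n-2$, so $k=2$ and $\dim\mathrm{GV}(\mathcal L_G)=3n-3$.

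The part I expect to be the real obstacle is precisely this full-rank claim for $\widetilde M$. A naive \emph{triangular} argument — pairing column $i$ with a non-edge at distance exactly $i$ — succeeds only when $G$ is a path, because otherwise not every distance $2,\dots,n-1$ is realized and many non-edges share one distance; the generating-function encoding is what separates non-edges lying at equal depth, and spotting that treeness makes the numerator degree fall below the denominator degree (so the partial fractions have the right support) is the heart of the proof.
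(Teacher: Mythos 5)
Your proof is correct, but the route you take for the key step --- showing that the generic $\mathcal{L}_G$-centralizer is $2$-dimensional --- is genuinely different from the paper's. The paper works directly with the commutator equation $AY-YA=0$ for $Y\in\mathcal{L}_G$: since $G$ is a tree, any two vertices are joined by at most one path of edge length $\leqslant 2$, so each entry of $AY-YA$ that is not identically zero is either $a_{ij}y_{jk}-a_{jk}y_{ij}$ (distance-two pair) or $y_{ii}a_{ik}+y_{kk}a_{ik}-(a_{ii}-a_{kk})y_{ik}$ (edge); connectivity then forces all off-diagonal $y$'s to be proportional and all diagonal $y$'s to be linear combinations of $y_{11}$ and a single off-diagonal unknown, giving $k\leqslant 2$ in a few lines. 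You instead pass through $C(A)=\mathrm{span}_{\RR}(\mathrm{id}_n,A,\dots,A^{n-1})$ and prove that $A^2,\dots,A^{n-1}$ are linearly independent modulo $\mathcal{L}_G$, i.e.\ you verify the tree case of the conjecture of Section 3 in exactly the form in which it is stated there. Your supporting steps are sound: the walk count in a tree does give $(A^i)_{jk}=\lambda^{i-\delta}h_{i-\delta}+O(\lambda^{i-\delta-1})$ because a walk using exactly $\delta=d(j,k)$ edge-steps must follow the geodesic; rescaling and passing to the limit can only drop the rank; the degree count $\deg_z N\leqslant n-1$ together with the vanishing of the first $n$ Taylor coefficients forces $N=0$; and the downward induction over the tree rooted at the leaf $u_0$ kills all $c_w$. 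The paper's argument is considerably shorter and more elementary; yours is heavier but isolates precisely the rank statement for the matrix $\bigl((A^i)_{jk}\bigr)$ that one would need in order to attack the conjecture for non-trees, where uniqueness of short paths --- and hence the paper's direct computation --- breaks down.
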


\begin{proof}
    By Proposition \ref{prop:Galois} the dimension of the~$\mathbb{Q}$-linear space spanned by the eigenvalues of~$\mathcal{L}_G$ is equal to~$n$. 
    The dimension of~$\mathcal{L}_G$ is equal to~$2n-1$, since~$G$ is a tree and therefore has~$n-1$ edges.
    It remains to compute the dimension of the $\mathcal{L}_G$-centralizer of a generic element in~$\mathcal{L}_G$. 
    Suppose~$A \in \mathcal{L}_G$. We are looking for solutions of the equation~$AY-YA = 0$, $Y \in \mathcal{L}_G$.
    This is a system of homogeneous linear equations in the unknowns~$y_{ij}$. 
    We have~$(AY-YA)_{ik} = \sum a_{ij}y_{jk} - \sum y_{ij}a_{jk}$.
    Note that since~$Y \in \mathcal{L}_G$,~$y_{ij}\neq 0$ if and only if~$(i,j)$ is an edge of~$G$ or~$i=j$.
    The same is generically true for~$a_{ij}$.
    Thus, $(AY-YA)_{ik}$ is not identically zero if and only if there exists~$j$ such that~$(i,j)$ and~$(j,k)$ are edges of~$G$ or if~$(i,k)$ is itself and edge of~$G$.
    In terms of the graph~$G$, this means that $(AY-YA)_{ik}$ is not identically zero if and only if there is a path of edge length at most~$2$ from~$i$ to~$k$.
    Since~$G$ is a tree, there is at most one such path. 
    Therefore, if~$i$ and~$k$ are connected by a path of edge length $2$ via the node~$j$, the corresponding entry of~$AY-YA$ is equal to $a_{ij} y_{jk} - a_{jk}y_{ij}$. 
    It is equal to zero if~$y_{jk}$ is proportional to~$y_{ij}$ with the coefficient~$a_{ij}/a_{jk}$ (note that~$a_{jk}$ is generically non-zero). 
    Since~$G$ is connected, we conclude that all the~$y_{ij}$ with~$i\neq j$ are proportional.
    If~$i$ and~$k$ are connected by an edge, the corresponding entry of~$AY-YA$ is equal to~$y_{ii}a_{ik} + y_{kk}a_{ik} - (a_{ii}-a_{kk})y_{ik}$.
    If it is equal to zero, then~$y_{kk}$ is a linear combination of~$y_{ii}$ and~$y_{ik}$.
    We conclude that, since~$G$ is connected and all the~$y_{ik}$ are proportional, all the~$y_{ii}$ can be expressed as linear combinations of~$y_{11}$ and just one~$y_{jk}$ with~$j\neq k$. 
    Therefore, the centralizer, which is the solution space of the considered linear system, is at most $2$-dimensional. Since it contains~$\mathrm{id}_n$ and~$A$, it is exactly two-dimensional.
    The statement of the Theorem now follows from Theorem \ref{thm:dimeq} for~$m = n$,~$d=2n-1$ and~$k=2$. 
\end{proof}

\begin{example}
    For~$n=4$ there are exactly two non-isomorphic trees, shown below. By Theorem \ref{thm:dimgraph}, the dimension of their Gibbs varieties is equal to~$9$. Therefore, these Gibbs varieties are hypersurfaces in~$\mathbb{C}^{\binom{n+1}{2}}=\mathbb{C}^{10}$. 

\begin{center}

\begin{tikzpicture}[main/.style = {draw, circle}] 
\node[main] (1) {$1$}; 
\node[main] (2) [ right of=1] {$2$};
\node[main] (3) [below of=2] {$3$}; 
\node[main] (4) [left of=3] {$4$};

\draw (1) -- (2);
\draw (2) -- (3);
\draw (3) -- (4);
\end{tikzpicture} 
\qquad \qquad
\begin{tikzpicture}[main/.style = {draw, circle}] 
\node[main] (1) {$1$}; 
\node[main] (2) [ right of=1] {$2$};
\node[main] (3) [below of=2] {$3$}; 
\node[main] (4) [left of=3] {$4$};

\draw (1) -- (2);
\draw (1) -- (3);
\draw (1) -- (4);
\end{tikzpicture} 
\end{center}
The corresponding LSSMs are 
$$
\begin{pmatrix}
    y_{11} &  y_{12} & 0 & 0\\
    y_{12} & y_{22} & y_{23} & 0 \\
    0 & y_{23} & y_{33} & y_{34} \\
    0 & 0 & y_{34} & y_{44}
\end{pmatrix}
\text{ and }
\begin{pmatrix}
    y_{11} & y_{12} & y_{13} & y_{14}\\
    y_{12} & y_{22} & 0 & 0\\
    y_{13} & 0 & y_{33} & 0\\
    y_{14} & 0 & 0 & y_{44}
\end{pmatrix} 
,
$$
respectively. 

For the~$4$-chain, the graph on the left, the Gibbs variety is defined by a single homogeneous equation of degree~$6$ that has $96$ terms. For the graph on the right the defining equation is also homogeneous of degree~$6$. It has~$60$ terms. These two equations were found using Algorithm \ref{alg:num}. 
\end{example}


\section{Logarithmic sparsity from coloured graphs}

Sparse LSSMs defined by coloured graphs appear in the study of coloured Gaussian graphical models in algebraic statistics \cite{HoLa}, \cite{StUhl}. In this section we study the properties of Gibbs varieties of such LSSMs. 

Consider the graph~$G$ and suppose its vertices are labeled by~$p$ colours and edges are labeled by~$q$ colours. The corresponding LSSM~$\mathcal{L}$ is cut out by the following three sets of equations.

\begin{enumerate}
    \item $x_{ij} = 0$ if $(i,j)$ is not an edge of~$G$
    \item $x_{ii} = x_{jj}$ if the vertices~$i$ and~$j$ have the same colour. 
    \item $x_{ij} = x_{kl}$ if $(i,j)$ and~$(k,l)$ are edges of~$G$ that have the same colour.
\end{enumerate}

It is immediately clear that $\dim \mathcal{L} = p+q$. 

We will denote coloured graphs by~$\mathcal{G}$ and the corresponding LSSMs by~$\mathcal{L}_\mathcal{G}$. The corresponding uncoloured graph will be denoted by~$G$, as usual. Note that since~$\mathcal{L}_{\mathcal{G}} \subseteq \mathcal{L}_{G}$, the inclusion of the Gibbs varieties also holds:~$\mathrm{GV}(\mathcal{L}_\mathcal{G}) \subseteq \mathrm{GV}(\mathcal{L}_G)$. Since the identity matrix is in~$\mathcal{L}_\mathcal{G}$ for any~$\mathcal{G}$, the dimension bound from Proposition \ref{prop:upbound} holds for coloured graphs as well. 

\begin{definition}
    We say that~$X\in \mathbb{S}_+^n$ satisfies the \emph{coloured sparsity pattern} given by~$\mathcal{G}$ if~$X \in \mathcal{L}_\mathcal{G}$. 
\end{definition}

\begin{proposition} \label{prop:dimcol}
    Let~$\mathcal{G}$ be a coloured graph on~$n$ nodes in which vertices are labeled by~$p$ colours and edges are labeled by~$q$ colours. Then~$\dim \mathrm{GV} (\mathcal{L}_\mathcal{G}) \leqslant n+p+q-2$.
\end{proposition}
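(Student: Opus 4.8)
The plan is to mimic the proof of Theorem~\ref{thm:dimgraph}, i.e.\ to apply Theorem~\ref{thm:dimeq} with the three ingredients $m$, $d$, $k$, and bound each of them. We already know $d = \dim \mathcal{L}_{\mathcal{G}} = p+q$, so it remains to control $m$ (the dimension of the $\mathbb{Q}$-span of the eigenvalues of $\mathcal{L}_{\mathcal{G}}$) and $k$ (the dimension of the $\mathcal{L}_{\mathcal{G}}$-centralizer of a generic element). The bound $m \leqslant n$ is automatic, since any $n\times n$ matrix has at most $n$ distinct eigenvalues; so the content is the lower bound $k \geqslant 2$. But that is immediate: the identity matrix $\mathrm{id}_n$ and any generic $A\in\mathcal{L}_{\mathcal{G}}$ both lie in $\mathcal{L}_{\mathcal{G}}$ and commute with $A$, and they are linearly independent (a generic $A$ is not a scalar matrix, because $\mathcal{L}_{\mathcal{G}}$ contains non-scalar matrices whenever $G$ has at least one edge or at least two colour classes of vertices — and if $\mathcal{L}_{\mathcal{G}}$ consisted only of scalar matrices the statement is trivial anyway since then $n=p=1$, $q=0$ and the Gibbs variety is a point). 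Hence $C_{\mathcal{L}_{\mathcal{G}}}(A) \supseteq \mathrm{span}_{\mathbb{R}}(\mathrm{id}_n, A)$ has dimension at least $2$.

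Putting these together, Theorem~\ref{thm:dimeq} gives
\[
\dim \mathrm{GV}(\mathcal{L}_{\mathcal{G}}) = m + d - k \leqslant n + (p+q) - 2,
\]
which is exactly the claimed bound. I would write this as a short three-line argument: recall $d=p+q$; note $m\leqslant n$ trivially; observe $k\geqslant 2$ via $\mathrm{id}_n, A$; conclude by Theorem~\ref{thm:dimeq}. One subtlety worth a sentence: Theorem~\ref{thm:dimeq} is stated for an arbitrary LSSM, so no hypothesis on $\mathbb{Q}$-linear independence of eigenvalues is needed here — the bound $m\leqslant n$ does not require Proposition~\ref{prop:Galois} (that proposition is only used to get the reverse inequality $m = n$ in the tree case, which we do not need for an upper bound).

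I do not anticipate a real obstacle; this is essentially the ``easy half'' of the tree computation, with the exact centralizer computation replaced by the trivial lower bound $k\geqslant 2$. The only thing to be careful about is the degenerate case where a generic element of $\mathcal{L}_{\mathcal{G}}$ is scalar, so that $\mathrm{id}_n$ and $A$ fail to be independent; as noted this forces $\mathcal{L}_{\mathcal{G}}$ to be one-dimensional and spanned by $\mathrm{id}_n$, whence $n = p = 1$, $q = 0$, and $\mathrm{GV}(\mathcal{L}_{\mathcal{G}})$ is a single point in $\mathbb{C}^1$, so $\dim \mathrm{GV}(\mathcal{L}_{\mathcal{G}}) = 0 \leqslant n+p+q-2 = 0$ holds with equality. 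Handling (or explicitly excluding) this corner case is the one place where a careless proof could slip.
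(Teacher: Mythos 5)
Your proof is correct and is essentially the paper's own argument: the paper gives no separate proof of this proposition, simply observing that since $\mathrm{id}_n \in \mathcal{L}_\mathcal{G}$, the centralizer argument behind Proposition~\ref{prop:upbound} ($k \geqslant 2$ via $\mathrm{id}_n$ and $A$, $m \leqslant n$ trivially, $d = p+q$) carries over verbatim. The only slip is in your aside on the degenerate case: $\mathcal{L}_\mathcal{G} = \RR\cdot\mathrm{id}_n$ does not force $n=1$ (one vertex colour and no edges gives this for any $n$), and its Gibbs variety is the line of scalar matrices, which has dimension $1$ rather than $0$ (so the bound actually fails for $n=1$) — but this case is excluded by the paper's standing assumption that $G$ is connected with $n\geqslant 2$, so it does not affect the substance of your argument.
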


Note that if~$\mathcal{G}$ is a coloured graph, the eigenvalues of~$\mathcal{L}_\mathcal{G}$ are not necessarily~$\mathbb{Q}$-linearly independent. Therefore, the upper bound from Proposition~\ref{prop:dimcol} is not always attained. 

\begin{example}
    Consider the graph 
\begin{tikzpicture}[main/.style = {draw,circle}] 
\node[main, fill=blue] (1) {}; 
\node[main, fill=blue] (2) [right of=1] {};
\node[main, fill=blue] (3) [right of=2] {};

\draw[color=red] (1) -- (2);
\draw (2) -- (3);
\end{tikzpicture}.
    The corresponding LSSM is
    $$
    \begin{pmatrix}
        y_1 & y_2 & 0\\
        y_2 & y_1 & y_3\\
        0 & y_3 & y_1
    \end{pmatrix}.
    $$
    The eigenvalues of this LSSM are~$\mathbb{Q}$-linearly dependent: they satisfy the equation~$2\lambda_1 = \lambda_2 + \lambda_3$. We have~$\dim \mathrm{GV}(\mathcal{L}) = 3 < n+p+q-2 = 3 + 1 + 2 - 2 =4$. Note that in this case~$\dim \mathrm{GV}(\mathcal{L}_\mathcal{G}) = \dim\mathrm{GM}(\mathcal{L}_\mathcal{G})$ and the Gibbs manifold of~$\mathcal{L}_\mathcal{G}$, i.e. the set of matrices that satisfy the coloured logarithmic sparsity condition given by~$\mathcal{G}$, is the positive part of its Gibbs variety, i.e.~$\mathrm{GM}(\mathcal{L}_\mathcal{G}) = \mathrm{GV}(\mathcal{L}_\mathcal{G}) \cap \mathbb{S}^n_+$. This means that the set of matrices with the coloured logarithmic sparsity pattern given by this graph can be described algebraically. 
\end{example}

In order to illustrate how different colourings of the same graph affect the Gibbs variety, we conclude this section with analysing coloured graphs for which the underlying graph is the $3$-chain. This is done using \cite[Algorithm 1]{GM}.

\begin{enumerate}
    \item  \begin{tikzpicture}[main/.style = {draw,circle}] 
\node[main, fill=blue] (1) {}; 
\node[main, fill=green] (2) [right of=1] {};
\node[main, fill=yellow] (3) [right of=2] {};

\draw[color=red] (1) -- (2);
\draw (2) -- (3);
\end{tikzpicture}

The corresponding LSSM is 
$$\mathcal{L}_\mathcal{G} = \begin{pmatrix}
    y_1 & y_4 & 0\\
    y_4 & y_2 & y_5\\
    0 & y_5 & y_3\\
\end{pmatrix}.$$
$\dim \mathrm{GV}(\mathcal{L}_\mathcal{G}) = 6$ and there are no polynomial equations that hold on the Gibbs variety.

    \item \begin{tikzpicture}[main/.style = {draw,circle}] 
\node[main, fill=blue] (1) {}; 
\node[main, fill=green] (2) [right of=1] {};
\node[main, fill=yellow] (3) [right of=2] {};

\draw (1) -- (2);
\draw (2) -- (3);
\end{tikzpicture}

The corresponding LSSM is 
$$\mathcal{L}_\mathcal{G} = \begin{pmatrix}
    y_1 & y_4 & 0\\
    y_4 & y_2 & y_4\\
    0 & y_4 & y_3\\
\end{pmatrix}.$$
$\dim \mathrm{GV}(\mathcal{L}_\mathcal{G}) = 5$ and the Gibbs variety is a cubic hypersurface whose prime ideal is generated by the polynomial
\begin{multline*}
     x_{11}x_{13}x_{23}-x_{12}^2 x_{23} + x_{12}x_{22}x_{13}-x_{12}x_{13}^2-\\-x_{12}x_{13}x_{33}+x_{12}x_{23}^2-x_{22}x_{13}x_{23}+x_{13}^2 x_{23}.
\end{multline*}

    \item \begin{tikzpicture}[main/.style = {draw,circle}] 
\node[main, fill=blue] (1) {}; 
\node[main, fill=blue] (2) [right of=1] {};
\node[main, fill=green] (3) [right of=2] {};

\draw[color=red] (1) -- (2);
\draw (2) -- (3);
\end{tikzpicture}

The corresponding LSSM is 
$$\mathcal{L}_\mathcal{G} = \begin{pmatrix}
    y_1 & y_3 & 0\\
    y_3 & y_1 & y_4\\
    0 & y_4 & y_2\\
\end{pmatrix}.$$
$\dim \mathrm{GV}(\mathcal{L}_\mathcal{G}) = 5$ and the Gibbs variety is a cubic hypersurface. Its prime ideal is generated by the polynomial 
\begin{multline*}
     -x_{11}x_{12}x_{23}+x_{11}x_{22}x_{13} - x_{11}x_{13}x_{33}+x_{12}x_{22}x_{23}-\\-x_{22}^2 x_{13}+x_{22}x_{13}x_{33}+x_{13}^3-x_{13}x_{23}^2.
\end{multline*}

    \item \begin{tikzpicture}[main/.style = {draw,circle}] 
\node[main, fill=blue] (1) {}; 
\node[main, fill=blue] (2) [right of=1] {};
\node[main, fill=green] (3) [right of=2] {};

\draw (1) -- (2);
\draw (2) -- (3);
\end{tikzpicture}

The corresponding LSSM is 
$$\mathcal{L}_\mathcal{G} = \begin{pmatrix}
    y_1 & y_3 & 0\\
    y_3 & y_1 & y_3\\
    0 & y_3 & y_2\\
\end{pmatrix}.$$
$\dim \mathrm{GV}(\mathcal{L}_\mathcal{G}) = 4$. The Gibbs variety is a complete intersection, its prime ideal is generated by the polynomials
    $$x_{11} - x_{22} + x_{33},$$
    $$-x_{12}x_{23}+x_{22}x_{13}-x_{13}^2-x_{13}x_{33}+x_{23}^2.$$

    \item \begin{tikzpicture}[main/.style = {draw,circle}] 
\node[main, fill=blue] (1) {}; 
\node[main, fill=green] (2) [right of=1] {};
\node[main, fill=blue] (3) [right of=2] {};

\draw[color=red] (1) -- (2);
\draw (2) -- (3);
\end{tikzpicture}

The corresponding LSSM is 
$$\mathcal{L}_\mathcal{G} = \begin{pmatrix}
    y_1 & y_3 & 0\\
    y_3 & y_2 & y_4\\
    0 & y_4 & y_1\\
\end{pmatrix}.$$
$\dim \mathrm{GV}(\mathcal{L}_\mathcal{G}) = 5$ and the Gibbs variety is a cubic hypersurface. Its prime ideal is generated by the polynomial 
$$-x_{11}x_{12}x_{23}+x_{12}^2 x_{13}+x_{12} x_{23} x_{33} - x_{13}x_{23}^2.$$

    \item \begin{tikzpicture}[main/.style = {draw,circle}] 
\node[main, fill=blue] (1) {}; 
\node[main, fill=green] (2) [right of=1] {};
\node[main, fill=blue] (3) [right of=2] {};

\draw (1) -- (2);
\draw (2) -- (3);
\end{tikzpicture}

The corresponding LSSM is 
$$\mathcal{L}_\mathcal{G} = \begin{pmatrix}
    y_1 & y_3 & 0\\
    y_3 & y_2 & y_3\\
    0 & y_3 & y_1\\
\end{pmatrix}.$$
$\dim \mathrm{GV}(\mathcal{L}_\mathcal{G}) = 4$ and the Gibbs variety is an affine subspace with the prime ideal generated by~$x_{12}-x_{23}$ and~$x_{11}-x_{33}$.

    \item \begin{tikzpicture}[main/.style = {draw,circle}] 
\node[main, fill=blue] (1) {}; 
\node[main, fill=blue] (2) [right of=1] {};
\node[main, fill=blue] (3) [right of=2] {};

\draw[color=red] (1) -- (2);
\draw (2) -- (3);
\end{tikzpicture}

The corresponding LSSM is 
$$\mathcal{L}_\mathcal{G} = \begin{pmatrix}
    y_1 & y_2 & 0\\
    y_2 & y_1 & y_3\\
    0 & y_3 & y_1\\
\end{pmatrix}.$$
$\dim \mathrm{GV}(\mathcal{L}_\mathcal{G}) = 3$. The prime ideal of the Gibbs variety is generated by~$7$ polynomials:
$$x_{12}x_{13}-x_{22}x_{23}+x_{23}x_{33},$$
$$x_{11}x_{13}-x_{12}x_{23}+x_{13}x_{33},$$
$$x_{11}x_{22}-x_{11}x_{33}-x_{22}^2+x_{22}x_{33}+x_{13}^2,$$
$$x_{12}^2 - x_{22}^2 + x_{13}^2 + x_{33}^2,$$
$$x_{11}x_{12}-x_{12}x_{22}+x_{13}x_{23},$$
$$x_{11}^2-x_{22}^2+x_{13}^2+x_{23}^2,$$
$$-x_{12}x_{22}x_{23}+x_{12}x_{23}x_{33}+x_{22}^2 x_{13}-x_{13}^3-x_{13}x_{33}^2.$$

    \item \begin{tikzpicture}[main/.style = {draw,circle}] 
\node[main, fill=blue] (1) {}; 
\node[main, fill=blue] (2) [right of=1] {};
\node[main, fill=blue] (3) [right of=2] {};

\draw (1) -- (2);
\draw (2) -- (3);
\end{tikzpicture}

The corresponding LSSM is 
$$\mathcal{L}_\mathcal{G} = \begin{pmatrix}
    y_1 & y_2 & 0\\
    y_2 & y_1 & y_2\\
    0 & y_2 & y_1\\
\end{pmatrix}.$$
This is a commuting family and therefore, by \cite[Theorem 2.7]{GM}, $\dim \mathrm{GV}(\mathcal{L}_\mathcal{G}) = 2$. The prime ideal of the Gibbs variety is generated by~$3$ linear forms and~$1$ quadric:~$x_{22}-x_{13}-x_{33}$,~$x_{12}-x_{23}$,~$x_{11}-x_{33}$ and~$-2x_{13}x_{33}+x_{23}^2$.
\end{enumerate}

\section{From Analytic to Algebraic Equations}
Since the logarithm is an analytic function on~$\mathbb{R}_{>0}$, the set of matrices satisfying the logarithmic sparsity pattern given by a graph~$G$ can be defined via formal power series equations. One way to write these equations in a compact form is by using Sylvester's formula. 

\begin{theorem}[Sylvester \cite{Sylv}] \label{thm:sylv}
    Let $f: D \rightarrow \mathbb{R}$ be an analytic function on an open set $D \subset \mathbb{R}$ and~$M \in \mathbb{R}^{n \times n}$ a matrix that has $n$ distinct eigenvalues~$\lambda_1,\ldots,\lambda_n $ in $D$. Then
\[ f(M) \,=\, \sum\limits_{i=1}^{n}f(\lambda_i)M_i, \quad \text{with} \quad M_i \,=\, \prod_{j\neq i}\dfrac{1}{\lambda_i-\lambda_j}(M - \lambda_j  \cdot {\rm id}_n).\]
We note that the product on the right hand side takes place
in the commutative ring $\,\RR[M]$.
\end{theorem}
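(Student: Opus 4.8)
\emph{Proof proposal.} The plan is to identify the matrices $M_i$ in the statement as the values at $M$ of the Lagrange interpolation polynomials
$$\ell_i(t) \,=\, \prod_{j\neq i}\frac{t-\lambda_j}{\lambda_i-\lambda_j}\,\in\,\RR[t],$$
so that $M_i=\ell_i(M)$, and then to show that $\{M_1,\dots,M_n\}$ is a complete system of orthogonal idempotents adapted to $M$. Throughout we use that, since $M$ has $n$ distinct eigenvalues, its minimal polynomial has the $n$ distinct roots $\lambda_1,\dots,\lambda_n$ and hence equals the characteristic polynomial $\chi(t)=\prod_{j=1}^n(t-\lambda_j)$; by Cayley--Hamilton, $\chi(M)=0$, so evaluation at $M$ identifies $\RR[M]$ with $\RR[t]/(\chi)$ and two polynomials congruent modulo $\chi$ take the same value at $M$. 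This also makes the ``product in the commutative ring $\RR[M]$'' remark immediate, since powers of $M$ commute.

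First I would record three polynomial identities. (i) $\sum_{i=1}^n\ell_i(t)=1$ identically, because the left-hand side minus $1$ has degree $\leqslant n-1$ and vanishes at the $n$ distinct points $\lambda_1,\dots,\lambda_n$. (ii) $\ell_i(t)\ell_j(t)\equiv\delta_{ij}\,\ell_i(t)\pmod{\chi(t)}$: for $i\neq j$ the product $\ell_i\ell_j$ vanishes at every $\lambda_k$ and so is divisible by $\chi$; for $i=j$ the polynomial $\ell_i(\ell_i-1)$ vanishes at every $\lambda_k$ (using $\ell_i(\lambda_i)=1$ and $\ell_i(\lambda_k)=0$ for $k\neq i$) and so is divisible by $\chi$. (iii) $(t-\lambda_i)\,\ell_i(t)=\chi(t)\big/\!\prod_{j\neq i}(\lambda_i-\lambda_j)\equiv 0\pmod{\chi(t)}$. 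Substituting $t=M$ turns these into the matrix identities $\sum_i M_i=\mathrm{id}_n$, $M_iM_j=\delta_{ij}M_i$, and $MM_i=\lambda_i M_i$.

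Next I would iterate $MM_i=\lambda_i M_i$ to get $M^kM_i=\lambda_i^kM_i$ for all $k\geqslant 0$, hence $g(M)M_i=g(\lambda_i)M_i$ for every polynomial $g$. Passing to the limit in the power series defining $f(M)$ — equivalently, diagonalizing $M=PDP^{-1}$, so that both $f(M)M_i$ and $f(\lambda_i)M_i$ reduce to the same diagonal computation — yields $f(M)M_i=f(\lambda_i)M_i$. Finally, writing $f(M)=f(M)\cdot\mathrm{id}_n=f(M)\sum_i M_i$ and substituting gives $f(M)=\sum_{i=1}^n f(\lambda_i)M_i$, which is the claim.

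The one genuine subtlety, and the step I would handle most carefully, is the passage from $g(M)M_i=g(\lambda_i)M_i$ for polynomials $g$ to the same identity for an analytic $f$ — that is, the fact that $f(M)$ depends only on the numbers $f(\lambda_1),\dots,f(\lambda_n)$. For the functions $\exp$ and $\log$ actually used in this paper, $f(M)$ is by definition a convergent power series in $M$, and the identity follows from continuity (conjugation by $P$ and right multiplication by $M_i$ are continuous, and after recentering the eigenvalues lie in the disk of convergence). In full generality one invokes the standard fact that $f(M)=P\,\mathrm{diag}\big(f(\lambda_1),\dots,f(\lambda_n)\big)P^{-1}$ for diagonalizable $M$, which may be taken as the definition of the functional calculus or, if $f$ is extended holomorphically to a complex neighbourhood of $D$, derived from the Cauchy integral $f(M)=\tfrac{1}{2\pi i}\oint f(z)(z\,\mathrm{id}_n-M)^{-1}\,dz$ by a residue computation.
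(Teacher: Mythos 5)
Your argument is correct. Note, however, that the paper does not prove this statement at all: it is quoted as a classical result of Sylvester with a citation, so there is no in-paper proof to compare against. Your route — recognizing $M_i=\ell_i(M)$ for the Lagrange basis polynomials $\ell_i$, verifying the three congruences $\sum_i\ell_i\equiv 1$, $\ell_i\ell_j\equiv\delta_{ij}\ell_i$ and $(t-\lambda_i)\ell_i\equiv 0$ modulo the (squarefree) characteristic polynomial, and then deducing $\sum_i M_i=\mathrm{id}_n$, $M_iM_j=\delta_{ij}M_i$, $MM_i=\lambda_iM_i$ — is the standard proof via spectral idempotents, and every step checks out. You are also right to isolate the only genuine subtlety, namely what $f(M)$ means for a general analytic $f$: for a matrix with $n$ distinct real eigenvalues the usual definition is $f(M)=P\,\mathrm{diag}(f(\lambda_1),\dots,f(\lambda_n))P^{-1}$, under which the identity $f(M)M_i=f(\lambda_i)M_i$ is immediate (each $M_i$ is the projection onto the $\lambda_i$-eigenspace), and for the power-series functions $\exp$ and $\log$ actually used in the paper the limiting argument you sketch suffices. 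The proof is complete and self-contained, which is more than the paper offers for this statement.
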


By setting~$f$ to be the logarithm function, we obtain a parametrization of~$\log X$ with rational functions in the entries~$x_{ij}$ of~$X$, the eigenvalues~$\lambda_i$ of~$X$ and their logarithms~$\log \lambda_i$. The logarithmic sparsity condition induced on~$X$ requires that some components of this parametrization are zero and therefore gives a system of polynomial equations in~$x_{ij}$,~$\lambda_i$ and~$\log{\lambda_i}$. By eliminating the variables~$\lambda_i$ and~$\log \lambda_i$ from this system while taking into account the polynomial relations between~$\lambda_i$ and~$x_{ij}$ given by the coefficients of the characteristic polynomial, we obtain a set of defining equations of~$\mathrm{GV}(\mathcal{L}_G)$. This procedure is described by Algorithm \ref{alg:graph_impl}. 

\begin{algorithm}
\caption{Implicitization of the Gibbs variety of~$\mathcal{L}_G$ given by a graph~$G$} \label{alg:graph_impl}
\begin{description}[itemsep=0pt]
\item[Input ] 
    A simple undirected connected graph~$G$;
\item[Output ] A set of defining equations of~$\mathrm{GV}(\mathcal{L}_G)$. 
\end{description}

\begin{enumerate}[label = \textbf{(S\arabic*)}, leftmargin=*, align=left, labelsep=2pt, itemsep=0pt]
    \item Let~$S := \{(i,j)|1\leqslant i \leqslant j \leqslant n \text{ and } (i,j)\not\in E(G)\}$. 
    
    \item Let~$\{a_{ij}\} = A := \sum\limits_{i=1}^{n}\log(\lambda_i)X_i, \quad \text{with} \quad X_i \,=\, \prod_{j\neq i}\dfrac{1}{\lambda_i-\lambda_j}(X - \lambda_j  \cdot \mathrm{id}_n)$, where~$X = (x_{ij})$ is a symmetric matrix of variables.
    
    \item Let~$E_1 := \{a_{ij}|(i,j)\in S\}$.

    \item Clear the denominators in~$E_1$ and record the least common denominator~$D$.

    \item Compute the characteristic polynomial~$P_X(x_{ij};\lambda) = \det(X -\lambda \mathrm{id}_n) = c_0(x_{ij}) + c_1(x_{ij})\lambda+\ldots+c_n(x_{ij})\lambda^n$. 

    \item Let~$E_2 := \{\text{the $n$ polynomials } (-1)^i \sigma_{n-i}(\lambda) - c_i(y)\}$, where~$\sigma_{n-i}(\lambda)$ is the~$(n-i)$th elementary symmetric polynomial in the variables~$\lambda_1,\ldots,\lambda_n$. 
    
   \item Let~$I$ be the ideal in~$\mathbb{Q}[x_{ij},\lambda, \log \lambda]$ generated by~$E_1$ and~$E_2$. 

   \item $I := I:D^\infty$. 
   
   \item \label{step:elim} Let $J = I \cap \mathbb{Q}[x_{ij}]$. 

   \item \textbf{Return} a set of generators of~$J$.
\end{enumerate}
\end{algorithm}

\begin{theorem}
    Algorithm \ref{alg:graph_impl} is correct. The ideal~$J$ computed in step \ref{step:elim} is the prime ideal of~$\mathrm{GV}(\mathcal{L}_G)$.
\end{theorem}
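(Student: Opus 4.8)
The plan is to read Algorithm \ref{alg:graph_impl} as a rational implicitization and to identify its output with $\mathrm{GV}(\mathcal{L}_G)$ scheme-theoretically, combining Sylvester's formula (Theorem \ref{thm:sylv}) with the description of Gibbs varieties in \cite{GM}. Write $\mu_i$ for the symbol $\log\lambda_i$, so that the ideal $I=\langle E_1,E_2\rangle$ of step \ref{step:elim} lives in $\mathbb{Q}[\mathbf{x},\lambda,\mu]$; by Theorem \ref{thm:sylv} applied to $f=\log$, on the locus where $X$ has $n$ distinct eigenvalues the matrix $A$ of step (S2) equals the matrix logarithm of $X$ once $\mu_i$ is set to $\log\lambda_i$. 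Since $\mathbb{V}(I:D^\infty)=\overline{\mathbb{V}(I)\setminus\mathbb{V}(D)}$ and $J=(I:D^\infty)\cap\mathbb{Q}[\mathbf{x}]$, elimination theory gives $\mathbb{V}(J)=\overline{\pi(\mathbb{V}(I:D^\infty))}$, where $\pi$ is the projection onto $\mathbf{x}$-space; so the theorem reduces to two claims: (a) $\overline{\pi(\mathbb{V}(I:D^\infty))}=\mathrm{GV}(\mathcal{L}_G)$ as sets, and (b) $J$ is prime.

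For the inclusion $\mathrm{GV}(\mathcal{L}_G)\subseteq\mathbb{V}(J)$, note that a generic element of $\mathcal{L}_G$ is regular semisimple (as in the proof of Proposition \ref{prop:Galois}, specialize to a diagonal matrix with distinct entries), hence a generic $X\in\mathrm{GM}(\mathcal{L}_G)$ has $n$ distinct positive eigenvalues $\lambda_i$; putting $\mu_i:=\log\lambda_i$, Theorem \ref{thm:sylv} gives $A=\log X\in\mathcal{L}_G$, so $(X,\lambda,\mu)$ satisfies $E_1$ and $E_2$ and lies off $\mathbb{V}(D)$. Therefore a Zariski-dense subset of $\mathrm{GM}(\mathcal{L}_G)$ lies in $\pi(\mathbb{V}(I:D^\infty))$, and taking closures yields $\mathrm{GV}(\mathcal{L}_G)\subseteq\mathbb{V}(J)$, i.e. $J\subseteq I(\mathrm{GV}(\mathcal{L}_G))$.

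For the reverse inclusion together with primality, consider the dense open part of $\mathbb{V}(I:D^\infty)$ where $X$ is regular semisimple \emph{and} the $\mu_i$ are pairwise distinct. There the $X_i$ of step (S2) are the rank-one spectral projectors of $X$, so $X=\sum_i\lambda_iX_i$, $\sum_iX_i=\mathrm{id}_n$, $X_iX_j=\delta_{ij}X_i$, and $Y:=\sum_i\mu_iX_i$ is symmetric, regular semisimple, belongs to $\mathcal{L}_G$ by $E_1$, and has the $X_i$ as spectral projectors. The parametrization underlying \cite[Theorem 3.6]{GM}—valid here because the eigenvalues of $\mathcal{L}_G$ are $\mathbb{Q}$-linearly independent by Proposition \ref{prop:Galois} and $\mathcal{L}_G$ is defined over $\mathbb{Q}$—realises $\mathrm{GV}(\mathcal{L}_G)$ as the Zariski closure of the union of the commutant spaces $C(Y')\cap\mathbb{S}^n$ over regular semisimple $Y'\in\mathcal{L}_G$ (each such space is spanned by the projectors of $Y'$, and by the $\mathbb{Q}$-linear independence the closure is unchanged if the values $e^{\mu_i}$ are replaced by free parameters). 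Taking $Y'=Y$ gives $X=\sum_i\lambda_iX_i\in C(Y)\cap\mathbb{S}^n\subseteq\mathrm{GV}(\mathcal{L}_G)$, so this part of $\mathbb{V}(I:D^\infty)$ projects into the closed set $\mathrm{GV}(\mathcal{L}_G)$; combined with the previous paragraph this gives $\mathbb{V}(J)=\mathrm{GV}(\mathcal{L}_G)$. For (b), $\mathrm{GV}(\mathcal{L}_G)$ is irreducible by \cite[Theorem 3.6]{GM}, and it suffices to show $I:D^\infty$ is prime (its contraction $J$ is then prime as well); this I would obtain by checking that off $\mathbb{V}(D)$ the equations $E_2$ cut out a smooth irreducible variety (a symmetric matrix together with an ordered tuple of its distinct eigenvalues, crossed with affine $\mu$-space, irreducible by the full $S_n$-monodromy of the eigenvalues of a generic symmetric matrix) and that imposing $E_1$ leaves a single reduced irreducible component dominating $\mathrm{GV}(\mathcal{L}_G)$.

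The main obstacle is exactly this last point: the saturation and the control of the extraneous components of $\mathbb{V}(E_1,E_2)$. Since $A$ depends on $\mu$ only linearly, there are loci on which $A\in\mathcal{L}_G$ for trivial reasons—for instance whenever the $\mu_i$ all coincide, $A$ is a scalar matrix and hence lies in $\mathcal{L}_G$ for \emph{every} $X$—and a priori such loci could project dominantly onto the ambient space. The crux of the argument is to show that each of these components is contained in the closure of the degeneracy locus of Sylvester's formula, i.e. the set where two of the $\lambda_i$ (equivalently two of the $\mu_i$) collide, so that it disappears upon passing to $I:D^\infty$ and only the component parametrising $\exp(\mathcal{L}_G)$ survives. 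Establishing this cleanly—and, in particular, confirming that the least common denominator $D$ recorded in step (S4) cuts out the whole of that degeneracy locus—is the technical heart of the proof; the remaining steps are a routine translation between Theorem \ref{thm:sylv} and the results of \cite{GM}.
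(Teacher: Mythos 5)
Your strategy---a direct geometric analysis of the incidence variety $\mathbb{V}(I:D^\infty)$ and its projection---is genuinely different from the paper's proof, which is a short argument that $\langle E_2\rangle$ is prime because the eigenvalues of $\mathcal{L}_G$ are $\mathbb{Q}$-linearly independent, that Ax--Schanuel rules out further relations between the $\lambda_i$ and $\log\lambda_i$, and that therefore the saturated ideal is prime and elimination preserves primality. Your first three paragraphs (the inclusion $\mathrm{GV}(\mathcal{L}_G)\subseteq\mathbb{V}(J)$ via Sylvester's formula, and the fact that the locus with pairwise distinct $\mu_i$ maps into $\mathrm{GV}(\mathcal{L}_G)$ via the commutant description of the parametrization) are sound. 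But the step you yourself flag as the technical heart is not only missing --- it cannot be closed along the lines you propose.

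Concretely, the locus $B=\mathbb{V}(E_2)\times\{\mu_1=\dots=\mu_n\}$ that you exhibit is \emph{not} contained in $\mathbb{V}(D)$: the denominator $D$ recorded in step (S4) is a product of differences $\lambda_i-\lambda_j$, the ideal $I$ contains no equation tying $\mu_i$ to $\lambda_i$, and a generic point of $B$ has $X$ a generic symmetric matrix and hence pairwise distinct $\lambda_i$. Your parenthetical ``(equivalently two of the $\mu_i$)'' is exactly where the plan breaks: collisions among the $\lambda_i$ and among the $\mu_i$ are unrelated on $\mathbb{V}(I)$. Since $\sum_i X_i=\mathrm{id}_n$ holds identically (Lagrange interpolation of the constant $1$ at $n$ distinct nodes), the fibre of $\mathbb{V}(I)\setminus\mathbb{V}(D)$ over \emph{every} regular semisimple $X$ contains the line $\mu_1=\dots=\mu_n$, so $B$ survives the saturation and projects onto all of $\mathbb{C}^{\binom{n+1}{2}}$. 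By your own elimination identity $\mathbb{V}(J)=\overline{\pi(\mathbb{V}(I:D^\infty))}$, this would force $J=0$ whenever $\mathrm{GV}(\mathcal{L}_G)$ is a proper subvariety, so the argument cannot terminate in the stated conclusion; any repair must discard these trivial solutions by a mechanism other than saturation at $D$. (For comparison, the paper's proof does not engage with this locus at all: it obtains primality of the saturated ideal by fiat from Ax--Schanuel and the primality of $\langle E_2\rangle$, so your analysis, while incomplete, probes a point that the published argument leaves unaddressed.)
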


\begin{proof}
    Since the eigenvalues of~$\mathcal{L}_G$ are~$\mathbb{Q}$-linearly independent, the ideal generated by~$E_2$ is prime. Moreover, there is no~$\mathbb{C}$-algebraic relation between the eigenvalues of~$X$ and their logarithms that holds for any positive definite~$X$ (this is a consequence of Ax-Schanuel theorem \cite[(SP)]{Ax1971}). These two facts ensure that all the algebraic relations between~$X$,~$\lambda$ and~$\log \lambda$ are accounted for, and that the algorithm is thus correct. 
    The ideal generated by~$E_1$ and~$E_2$ is therefore also prime, after saturation, and elimination in step~\ref{step:elim} preserves primality. 
\end{proof}

Note that the primality~$J$ means that~$\mathrm{GV}(\mathcal{L}_G)$ is irreducible, as stated in \cite[Theorem 3.6]{GM}.

The advantage of this algorithm compared to \cite[Algorithm 1]{GM} is that it uses a smaller polynomial ring and fewer variables are eliminated.

\section{Relevance in applications}

In this section we show the role logarithmically sparse matrices play in two applied contexts: high-dimensional statistics and semidefinite programming. 

A typical problem in high-dimensional statistics is estimating the covariance matrix of a random vector of length~$n$ from~$l \ll n$ samples. It is known that no consistent estimator can be derived in such setup without making additional assumptions on the structure of the covariance matrix. This problem can in some cases be solved by assuming that the covariance matrix has a fixed logarithmic sparsity pattern \cite{BatteyFirst}, \cite{BatteySecond}. An advantage of this assumption is that once a logarithmic sparsity pattern is induced on the covariance matrix~$C$, it is also automatically induced on the concentration matrix~$K = C^{-1}$, since~$(\exp{L})^{-1} = \exp{(-L)}$.
In principle, one could relax the structural assumption of logarithmic sparsity and replace it by the assumption that the covariance matrix is an element of the Gibbs variety. The advantage of such relaxation is that checking whether a given set of polynomial equations is satisfied by the matrix is generally simpler that computing the matrix logarithm and then checking whether it satisfies the sparsity condition.

We now show how logarithmically sparse matrices arise in entropic regularization of semidefinite programming \cite{GM}. We start with giving basic definitions.

We fix an arbitrary linear map $\pi : \mathbb{S}^n \rightarrow \RR^d$.
This can be written in the~form $$ \pi(X) = \bigl( \langle A_1 , X \rangle,
\langle A_2 , X \rangle, \ldots, \langle A_d , X \rangle \bigr). $$
Here the $A_i \in \mathbb{S}^n$, and $\langle A_i , X \rangle := {\rm trace}(A_i X)$.
The image $\pi(\mathbb{S}^n_+)$  of the PSD cone $\mathbb{S}^n_+$
under this linear map $\pi$ is a \emph{spectrahedral shadow}.
In our setting it is a full-dimensional semialgebraic convex cone in $\RR^d$.  

{\em Semidefinite programming (SDP)} is the 
following convex optimization problem: 
\begin{equation*} \label{eq:SDP}
    {\rm Minimize} \quad \langle C , X \rangle \quad  \hbox{subject to} \quad X \in \mathbb{S}^n_+ 
    \,\text{ and }\, \pi(X) = b. 
\end{equation*}
See e.g.~\cite[Chapter 12]{MS}.
The instance of an SDP problem is
specified by the cost matrix $C \in \mathbb{S}^n$  and the right hand side vector $b \in \RR^d$. The feasible region $\mathbb{S}^n_+ \cap \pi^{-1}(b)$ is a \emph{spectrahedron}.
The SDP problem is feasible if and only if $\,b $ is 
in~$ \pi(\mathbb{S}^n_+)$.

Consider the LSSM $\mathcal{L} = {\rm span}_\RR (A_1, \ldots, A_d)$.
We usually assume that $\mathcal{L}$ contains a positive definite matrix.
This hypothesis ensures that each spectrahedron $\pi^{-1}(b)$ is compact.

Entropic regularization of SDP \cite[Section 5]{GM} is defined as follows:
 \begin{equation*}
\label{eq:regSDP} {\rm Minimize} \quad \langle C , X \rangle \,-\, \epsilon \cdot h(X) \quad  \hbox{subject to} \quad X \in \mathbb{S}^n_+ \,\text{ and } \,\pi(X) = b. 
\end{equation*}
Here $\epsilon > 0$ is a parameter, and $h$ denotes the {\em von Neumann entropy}
$$ h \,: \, \mathbb{S}^n_+ \,\rightarrow\, \RR \,, \,\,
X \,\mapsto\, {\rm trace} \bigl( X - X \cdot {\rm log}(X)  \bigr) . $$
 
The next result appears as Theorem 5.1 in \cite{GM} and illustrates the role of Gibbs manifolds in semidefinite programming.
The following \emph{affine space of symmetric matrices} (ASSM) is obtained by incorporating $\epsilon$ and the cost matrix $C$ into
the LSSM:
 $$ \mathcal{L}_\epsilon \,\, := \,\, \mathcal{L}- \frac{1}{\epsilon} C \quad
 \hbox{for any} \,\,\epsilon > 0. $$
Here we allow the case $\epsilon = \infty$, where 
the dependency on $C$ disappears and
the ASSM is simply the LSSM, i.e.~$\mathcal{L}_\infty = \mathcal{L}$. Note that Gibbs manifolds of ASSMs are defined analogously to the case of LSSMs. 
 
\begin{theorem}
   For $b \in \pi(\mathbb{S}^n_+)$, the intersection of $\pi^{-1}(b)$ with the 
   Gibbs manifold ${\rm GM}(\mathcal{L}_\epsilon)$
   consists of a single point $X_\epsilon^*$. This point is the optimal solution to the regularized SDP.
   For $\epsilon = \infty$, it is the unique
     maximizer of von Neumann entropy on
     the spectrahedron $\pi^{-1}(b)$.
\end{theorem}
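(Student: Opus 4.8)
The plan is to treat the regularized SDP as a strictly convex optimization problem over a compact feasible region and to read the Gibbs manifold off from its first-order optimality conditions. First I would record the structural facts that make this work: (i) the von Neumann entropy $h$ is \emph{strictly concave} on $\mathrm{int}(\mathbb{S}^n_+)$ and extends continuously to all of $\mathbb{S}^n_+$ with $0\log 0:=0$, so $X\mapsto \langle C,X\rangle-\epsilon h(X)$ is continuous on $\mathbb{S}^n_+$ and strictly convex on $\mathrm{int}(\mathbb{S}^n_+)$; (ii) the spectrahedron $\pi^{-1}(b)\cap\mathbb{S}^n_+$ is nonempty (since $b\in\pi(\mathbb{S}^n_+)$) and compact, the latter because $\mathcal{L}$ contains a positive definite matrix, whence $\mathcal{L}^\perp\cap\mathbb{S}^n_+=\{0\}$; and (iii) $\nabla h(X)=-\log X$, so the gradient of the objective at an interior feasible $X$ is $C+\epsilon\log X$. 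Facts (i) and (ii) already yield existence of a minimizer $X_\epsilon^*$ of the regularized SDP.

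The key step is to show $X_\epsilon^*\in\mathrm{int}(\mathbb{S}^n_+)$, i.e.\ the \emph{barrier} property of the entropy. Given a feasible $X$ on the boundary, pick any strictly feasible point $X_0\in\pi^{-1}(b)\cap\mathrm{int}(\mathbb{S}^n_+)$ and move along $X_t=(1-t)X+tX_0$; these are feasible for $t\in(0,1]$ and positive definite for $t>0$. The derivative of $-\epsilon h(X_t)=\epsilon\,\mathrm{trace}(X_t\log X_t)-\epsilon\,\mathrm{trace}(X_t)$ in $t$ equals $\epsilon\langle\log X_t,X_0-X\rangle$ up to bounded terms, and this tends to $-\infty$ as $t\to 0^+$ because $\log X_t$ diverges to $-\infty$ on the kernel of $X$ while $X_0$ is positive definite there; hence no boundary point is optimal. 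Combined with strict convexity of the objective on $\mathrm{int}(\mathbb{S}^n_+)$, this forces $X_\epsilon^*$ to be the \emph{unique} minimizer.

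Since the only constraints active at the interior optimum $X_\epsilon^*$ are the affine equalities $\pi(X)=b$, the KKT conditions hold (no constraint qualification beyond linearity is needed): there is $y\in\RR^d$ with $C+\epsilon\log X_\epsilon^*-\sum_i y_iA_i=0$, i.e.\ $\log X_\epsilon^* = \tfrac1\epsilon\bigl(\sum_i y_iA_i-C\bigr)\in\mathcal{L}-\tfrac1\epsilon C=\mathcal{L}_\epsilon$. Thus $X_\epsilon^*\in\exp(\mathcal{L}_\epsilon)=\mathrm{GM}(\mathcal{L}_\epsilon)$ and, being feasible, $X_\epsilon^*\in\pi^{-1}(b)\cap\mathrm{GM}(\mathcal{L}_\epsilon)$. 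Conversely, any $X\in\pi^{-1}(b)\cap\mathrm{GM}(\mathcal{L}_\epsilon)$ has $\log X=L-\tfrac1\epsilon C$ for some $L=\sum_i(\epsilon y_i)A_i\in\mathcal{L}$, so the positive definite feasible matrix $X$ satisfies the same stationarity equation with multiplier $y$; for a convex program with affine constraints these KKT conditions are sufficient for global optimality (the gradient is a subgradient and vanishes against every feasible direction, which lies in $\mathcal{L}^\perp$), so $X$ minimizes the regularized SDP and hence $X=X_\epsilon^*$ by uniqueness. This proves $\pi^{-1}(b)\cap\mathrm{GM}(\mathcal{L}_\epsilon)=\{X_\epsilon^*\}$ and identifies this point with the optimal solution. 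For $\epsilon=\infty$ the cost matrix drops out, $\mathcal{L}_\infty=\mathcal{L}$, and ``minimize $-h$'' reads as ``maximize $h$'', so the same argument with $C=0$ shows $X_\infty^*$ is the unique maximizer of von Neumann entropy on $\pi^{-1}(b)$.

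The main obstacle I anticipate is the interiority step: making the barrier argument rigorous requires controlling $\langle\log X_t,X_0-X\rangle$ as $X_t$ approaches the singular matrix $X$, decomposing with respect to $\ker X$ and checking the divergence is genuinely to $-\infty$ and at a rate dominating the bounded $\langle C,\cdot\rangle$ contribution. A secondary subtlety is the existence of a strictly feasible point used throughout (equivalently, that $\pi^{-1}(b)$ meets $\mathrm{int}(\mathbb{S}^n_+)$); this is where the hypotheses of the section on $b$ enter, and without it the intersection with the Gibbs manifold could be empty. The remaining ingredients --- strict concavity of $h$, compactness of the spectrahedron, and sufficiency of KKT for convex problems --- are standard and can be cited.
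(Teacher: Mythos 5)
Your argument is correct, and it is essentially the standard proof: the paper itself gives no proof of this statement but imports it verbatim as \cite[Theorem 5.1]{GM}, and the proof there runs along the same lines you propose --- compactness plus strict convexity for existence and uniqueness, the entropy's barrier behaviour to force the optimum into $\mathrm{int}(\mathbb{S}^n_+)$, and the Lagrange/KKT stationarity condition $C+\epsilon\log X_\epsilon^*\in\mathcal{L}$ to identify the optimum with the unique point of $\pi^{-1}(b)\cap\exp(\mathcal{L}_\epsilon)$. The one caveat you flag is real and worth emphasizing: if $b$ lies on the boundary of the cone $\pi(\mathbb{S}^n_+)$, then a supporting hyperplane argument shows every feasible $X$ is singular, so $\pi^{-1}(b)$ meets no positive definite matrix and the intersection with the Gibbs manifold is empty; the statement therefore implicitly requires $b\in\mathrm{int}(\pi(\mathbb{S}^n_+))$ (equivalently, a strictly feasible point), which is how the source states it. With that hypothesis made explicit, your proof is complete once the directional-derivative computation in the barrier step is written out.
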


Sets of matrices satisfying a fixed logarithmic sparsity pattern are Gibbs manifolds that correspond to a particular class of SDP constraints. If the sparsity pattern is given by a graph~$G$, the spectrahedron consists of PSD matrices for which some of the entries are fixed. More precisely, the entry~$x_{ij}$ is fixed if and only if~$i=j$ or~$(i,j)$ is an edge of~$G$. If in addition the graph is coloured, then one adds the constraints~$x_{ii}=x_{jj}$ if the nodes~$i$ and~$j$ have the same colour and~$x_{ij}=x_{kl}$ if the edges~$(i,j)$ and~$(k,l)$ have the same colour. 

\begin{example}
Let~$G$ be the~$4$-chain. The corresponding LSSM is
$$
\mathcal{L}_G = 
\begin{pmatrix}
y_{11} & y_{12} & 0 & 0\\
y_{12} & y_{22} & y_{23} & 0\\
0 & y_{23} & y_{33} & y_{34}\\
0 & 0 & y_{34} & y_{44}\\
\end{pmatrix}.
$$
The spectrahedron consists of matrices 
$$
\begin{pmatrix}
   b_{11} & b_{12} & x_{13} & x_{14}\\
   b_{12} & b_{22} & b_{23} & x_{24}\\
   x_{13} & b_{23} & b_{33} & b_{34}\\
   x_{14} & x_{24} & b_{34} & b_{44}
\end{pmatrix} \in \mathbb{S}^n_+
,
$$
where the entries~$b_{ij}$ are fixed and the entries~$x_{ij}$ are arbitrary such that the matrix is PSD.
\end{example}

\section*{Acknowledgements}
The author would like to thank Bernd Sturmfels, Simon Telen and Piotr Zwernik for helpful discussions and suggestions. 

\bibliographystyle{abbrv}
\bibliography{bibliography}

\bigskip
\bigskip

\noindent
\footnotesize
{\bf Author's address:}

\smallskip

\noindent Dmitrii Pavlov,
MPI-MiS Leipzig
\hfill \url{dmitrii.pavlov@mis.mpg.de}
\end{document}